\newcommand{\noun}[1]{\textsc{#1}}
\numberwithin{equation}{section}
\numberwithin{figure}{section}
\newlength{\lyxlabelwidth}      
  \theoremstyle{remark}
  \newtheorem*{acknowledgement*}{\protect\acknowledgementname}
\theoremstyle{plain}
\newtheorem{thm}{\protect\theoremname}[section]
  \theoremstyle{remark}
  \newtheorem{rem}[thm]{\protect\remarkname}
\newenvironment{elabeling}[2][]%
{\settowidth{\lyxlabelwidth}{#2}
\begin{description}[font=\normalfont,style=sameline,
leftmargin=\lyxlabelwidth,#1]}
{\end{description}}
  \theoremstyle{plain}
  \newtheorem{prop}[thm]{\protect\propositionname}
  \theoremstyle{plain}
  \newtheorem{cor}[thm]{\protect\corollaryname}
\date{}
\DeclareMathOperator{\Spec}{Spec}
\DeclareMathOperator{\ord}{ord}
\DeclareMathOperator{\nr}{nr}
\newcommand{\one}{\mathbbm{1}}
\newcommand{\Mod}[1]{\,\left(\textup{mod}\;#1\right)}
\theoremstyle{plain}
\newtheorem{task}{Task}
\theoremstyle{definition}
\newtheorem*{rems*}{Remarks}
\newtheorem*{disc*}{Discussion}
  \providecommand{\acknowledgementname}{Acknowledgement}
  \providecommand{\corollaryname}{Corollary}
  \providecommand{\propositionname}{Proposition}
  \providecommand{\remarkname}{Remark}
\providecommand{\theoremname}{Theorem}
\begin{document}

\title{Super-Golden-Gates for $PU\left(2\right)$}

\author{Ori Parzanchevski and Peter Sarnak}
\maketitle
\begin{center}
\emph{To David Kazhdan with admiration.}
\par\end{center}
\begin{abstract}
To each of the symmetry groups of the Platonic solids we adjoin a
carefully designed involution yielding topological generators of \emph{PU}(2)
which have optimal covering properties as well as efficient navigation.
These are a consequence of optimal strong approximation for integral
quadratic forms associated with certain special quaternion algebras
and their arithmetic groups. The generators give super efficient 1-qubit
quantum gates and are natural building blocks for the design of universal
quantum gates.
\end{abstract}

\section{Introduction}

The $n$-qubit circuits used for quantum computation are unitaries
in $U\left(\left(\mathbb{C}^{2}\right)^{\otimes n}\right)=U\left(2^{n}\right)$
which are products of elementary unitaries, each of which operates
on a fixed (typically at most $3$) number of qubits. The standard
universal gate set for quantum computing consists of all $1$-qubit
unitaries i.e.\ $U\left(2\right)$ (which can be applied to any of
the single qubits) and the $2$-bit XOR gate, together these generate
$U\left(2^{n}\right)$ (\cite{Nielsen2011QuantumComputationand}).
In a classical computer the only operations on a single bit are to
leave it or flip it. In the quantum setting we can rotate by these
$2\times2$ unitaries.

To further reduce to a finite universal gate set one has to settle
for a topologically dense set, and since overall phases do not matter
it suffices to find ``good'' topological generators of $G=PU\left(2\right)$.
The Solovay-Kitaev algorithm \cite{Nielsen2011QuantumComputationand}
ensures that any fixed topological generators of $G$ have reasonably
short words (i.e.\ circuits) to approximate any $x\in G$ (with respect
to the bi-invariant metric $d^{2}\left(x,y\right)=1-\frac{\left|\mathrm{trace}\left(x^{*}y\right)\right|}{2}$).
From the point of view of general polynomial type complexity classes
this is sufficient, however there is much interest (\cite{Nielsen2011QuantumComputationand,kliuchnikov2013fast,kliuchnikov2013asymptotically})
both theoretical and practical, to optimize the choice of such generators.

Golden-Gates (\cite{sarnak2015letter}) which correspond to special
arithmetic subgroups of unit quaternions and which were introduced
as optimal generating rotations in \cite{lubotzky1987hecke}, yield
variants of optimal generators. A particular case is the ``Clifford
plus T'' gates described below and which appear in most textbooks.
The Clifford gates form a finite subgroup $C_{24}$ of order $24$
in $G$ and to make the set universal one needs to add an extra element
of $G$. The popular choice is the order $8$ element $T=\left(\begin{smallmatrix}1 & 0\\
0 & e^{i\pi/4}
\end{smallmatrix}\right)$. That these generate an $S$-arithmetic group was shown in \cite{kliuchnikov2013fast},
see also \cite{sarnak2015letter}. 

Considerations of fault-tolerance when applying these to make circuits
in $U\left(2^{n}\right)$, require among other things that the universal
gate set consists of elements of finite order. Moreover for the Clifford
plus T gates, the applications of the $c$ gates with $c\in C_{24}$
in a circuit are considered to be of small cost compared to $T$ (\cite{Bravyi2005Universalquantumcomputation,Bocharov2012ResourceOptimalSingle}).
This leads to the ``T-count'' being the measure of complexity of
a word and to the problem that we consider in this note:

\emph{To find universal gate sets (i.e.\ ones that are topological
generators of $G$) which are of the form a finite group $C$ in $G$
together with an extra element $T$, which we take to be an involution,
so that $C$ plus $T$ is optimal with respect to covering $G$ with
a small $T$-count, and at the same time to be a able to navigate
$G$ efficiently with these gates.}

\bigskip{}

The key feature that was needed for a Golden-Gate construction was
that the corresponding $S$-arithmetic unit quaternion group act transitively
on the vertices of the corresponding $\left(q+1\right)$-regular tree
(here $q$ is a prime power). The extra ``miracle'' that is needed
here is that the group act transitively on the edges. With this extra
requirement there are only finitely many such ``Super-Golden-Gates'',
see Section \ref{sec:strong-approx}. We list some of them; in each
case the finite group $C$ is naturally a subgroup of the symmetries
of a platonic solid:
\begin{enumerate}
\item Pauli plus T \noun{(Cube})
\begin{align*}
C_{4} & =\left\langle \left(\begin{matrix}i & 0\\
0 & -i
\end{matrix}\right),\left(\begin{matrix}0 & 1\\
-1 & 0
\end{matrix}\right)\right\rangle ,\mbox{ the \ensuremath{4}-group of Pauli matrices}\\
T_{4} & =\left(\begin{matrix}1 & 1-i\\
1+i & -1
\end{matrix}\right).
\end{align*}
\item Minimal Clifford plus T \noun{(Octahedron})
\begin{align*}
C_{3} & =\left\{ \left(\begin{matrix}1 & 0\\
0 & 1
\end{matrix}\right),\left(\begin{matrix}1 & 1\\
i & -i
\end{matrix}\right),\left(\begin{matrix}1 & -i\\
1 & i
\end{matrix}\right)\right\} ,\mbox{ which is a subgroup of the Clifford group,}\\
T_{3} & =\left(\begin{matrix}0 & \sqrt{2}\\
1+i & 0
\end{matrix}\right).
\end{align*}
(Note: These generate a finite index subgroup of the usual ``Clifford
plus T'' group but the latter has redundancies in that the circuits
for the exactly synthesizable elements are not unique. For our choices,
it is.)
\item Hurwitz group plus T (\noun{Tetrahedron})
\begin{align*}
C_{12} & =\left\langle \left(\begin{matrix}i & 0\\
0 & -i
\end{matrix}\right),\left(\begin{matrix}1 & 1\\
i & -i
\end{matrix}\right)\right\rangle ,\\
T_{12} & =\left(\begin{matrix}3 & 1-i\\
1+i & -3
\end{matrix}\right).
\end{align*}
\item Clifford plus T \noun{(Octahedron})
\begin{align*}
C_{24} & =\left\langle \left(\begin{matrix}1 & 0\\
0 & i
\end{matrix}\right),\left(\begin{matrix}1 & 1\\
-1 & 1
\end{matrix}\right)\right\rangle ,\mbox{ the Clifford group}\\
T_{24} & =\left(\begin{matrix}-1-\sqrt{2} & 2-\sqrt{2}+i\\
2-\sqrt{2}-i & 1+\sqrt{2}
\end{matrix}\right)
\end{align*}
\item Klein's Icosahedral group plus T (\noun{Icosahedron)}
\begin{align*}
C_{60} & =\left\langle \left(\begin{matrix}1 & 1\\
i & -i
\end{matrix}\right),\left(\begin{matrix}1 & \varphi-i/\varphi\\
\varphi+i/\varphi & -1
\end{matrix}\right)\right\rangle ,\qquad\varphi=\frac{1+\sqrt{5}}{2};\\
T_{60} & =\left(\begin{matrix}2+\varphi & 1-i\\
1+i & -2-\varphi
\end{matrix}\right).
\end{align*}
\end{enumerate}
These Super-Golden-Gate sets all enjoy the same relative optimal distribution
and navigation properties that we describe next. The only difference
between them is the number $N\left(t\right)$ below, and from the
point of view of this count, example (5) is the best and presumably
it is the optimal absolute Super-Golden-Gate set.

Let $N\left(t\right)$ be the number of circuits in the elements of
$C$ and $T$, and of $T$-count $t$. That is words of the form $c_{0}Tc_{1}T\ldots Tc_{t}$
where the $c_{j}$'s are not $1$ except possibly at the ends. Clearly
\begin{equation}
N\left(t\right)=\left|C\right|^{2}\left(\left|C\right|-1\right)^{t-1},\qquad t\geq1\label{eq:N(t)}
\end{equation}
We want these $N\left(t\right)$ circuits to represent distinct elements
in $G$. In fact, for any $k$, we want the circuits of length at
most $k$ to realize distinct element in $G$ (these elements are
often referred to as the ``exactly synthesized'' gates). This requirement
is equivalent to the subgroup of $G$ generated by $C$ and $T$ being
isomorphic to $C*\left(\nicefrac{\mathbb{Z}}{2\mathbb{Z}}\right)$.
This is the case for the super-gates.

Next we want these $N\left(t\right)$ elements to almost cover $G$
optimally. That is almost all balls $B$ in $G$ (with respect to
the bi-invariant metric) of volume $V$, where $V\cdot N\gg\left(\log N\right)^{2}$,
should contain at least one of the $N=N\left(t\right)$ points. In
order to cover almost all balls, clearly $V\cdot N$ must be large,
so that for an almost all covering the above is essentially best possible,
that is almost all $x\in G$ have a circuit of essentially the shortest
possible T-count approximating $x.$ The Super-Golden-Gates enjoy
this optimal almost covering property (see Section \ref{sec:strong-approx}).
As with Golden-Gates (see \cite{sarnak2015letter}) these super-gates
do not cover all balls of this smallest size. There are rare balls
with volume $V\approx N^{-3/4}$ which are free of the $N=N\left(t\right)$
exactly synthesized elements, while every ball $B$ of volume at least
$N^{-1/2}$ does contain one of the $N\left(t\right)$ points. Whether
there exist gate sets which do not have this ``big hole'' feature
is an interesting open problem (\cite{sarnak2015letter,Rivin2017Inpreparation}). 

The final requirement for these super-gates is that we can find the
short circuits (whose existence is ensured from the discussion above)
efficiently. The task is given a ball $B$ in $G$ and a $k$, to
find (if it exists) a circuit in the gates of length at most $k$
and which lies in $B$. The problem is clearly in NP. The algorithm
introduced in {[}R-S{]} can be executed for our super-gate sets, and
it leads under the assumption that one can factor integers quickly,
to a heuristic algorithm, which for $B$ whose center is a \emph{diagonal}
matrix $y\in G$, resolves the above task in $\mathrm{poly}\left(\log\nicefrac{1}{V\left(B\right)}\right)$
steps. In particular for such a $y$ it finds for a given $k$, (one
of) the circuit of length at most $k$, which best approximates $y$.\footnote{Without the efficient factorization assumption one can find efficiently
a circuit in $B$ whose length is $\left(1+o\left(1\right)\right)$
times longer then the optimal one, see (\ref{eq:one-plus-o-one}).} On the other hand, the Diophantine problem (see below) that is at
the heart of the above algorithm for $y$ diagonal, is NP-complete
when $y$ is replaced by a general $x\in G$. Thus finding the shortest
circuit approximating a general $x$ in $G$ is apparently a genuinely
hard complexity problem. This does not preclude there being an efficient
algorithm which gives a good approximation to the shortest circuit.
By factoring the general $x$ in $G$ into a product $y_{1}y_{2}y_{3}$
where the $y_{j}$'s are in different diagonal subgroups (see Section
\ref{sec:strong-approx}) one can produce circuits with T-count which
is three times longer than the optimally short circuit. Removing this
factor of 3 remains a basic open problem concerning Golden Gates.

The Diophantine problem that underlies the analysis of these Golden
Gates is ``strong approximation for sums of four squares''. For
simplicity we restrict here to $\mathbb{Z}$ and $n\in\mathbb{N}$
odd (see Sections \ref{sec:Sums-of-squares} and \ref{sec:strong-approx}):
\begin{equation}
x_{1}^{2}+x_{2}^{2}+x_{3}^{2}+x_{4}^{2}=n.\label{eq:4-squares-intro}
\end{equation}
Let $S\left(n\right)$ be the set of integer solutions $x=\left(x_{1},x_{2},x_{3},x_{4}\right)$
to (\ref{eq:4-squares-intro}). To each $x\in S\left(n\right)$ let
$\widetilde{x}=\frac{x}{\sqrt{n}}\in S^{3}$, the unit sphere in $\mathbb{R}^{4}$
with its round metric and normalized volume $\mu$. In Section \ref{sec:strong-approx}
we show that these $\left|S\left(n\right)\right|$ points $\widetilde{x}$
almost cover $S^{3}$ optimally. More precisely, if $d\left(n\right)$
is the number of divisors of $n$ then if 
\[
\frac{V\cdot\left|S\left(n\right)\right|}{d\left(n\right)}\rightarrow\infty\quad\text{as}\quad n\rightarrow\infty,
\]
then 
\begin{equation}
\mu\left(S^{3}\backslash\left(\,\bigcup_{\smash{x\in S\left(n\right)}}B_{V}\left(\widetilde{x}\right)\right)\right)\rightarrow0.\label{eq:volume-cover}
\end{equation}
Here $B_{V}\left(\xi\right)$ is the ball in $S^{3}$ centered at
$\xi$ and of volume $V$. Among the ingredients in the proof of (\ref{eq:volume-cover})
are the Ramanujan Conjectures, which are theorems (\cite{deligne1974conjecture})
for the cases at hand. The computational complexity problem associated
with strong approximation for (\ref{eq:4-squares-intro}) is (see
Section \ref{sec:Sums-of-squares}):

\begin{task}\label{task:main-task}Given $n,\xi\in S^{3}$ and $V$
to find $x\in S\left(n\right)$ such that $\widetilde{x}\in B_{V}\left(\xi\right)$.\end{task}

This task is clearly in NP and Theorem \ref{thm:Task-IV-NP} shows
that it is NP-complete, at least under a radomized reduction. On the
other hand if $\xi$ is of the special form, $\xi=\left(\xi_{1},\xi_{2},0,0\right)$
then an adaption of \cite{ross2015optimal} is given in Section \ref{sec:Sums-of-squares}
which resolves the above task efficiently (i.e.\ in $\mathrm{poly}\left(\log n\right)$
steps). The algorithm assumes that one has an efficient algorithm
to factor $m$'s in $\mathbb{N}$, and for its running time it relies
on some heuristics (see Section \ref{subsec:Sums-of-four}).

We note that the analogous problem of an optimal topological generator
for $U\left(1\right)=\mathbb{R}/\mathbb{Z}$ has a golden solution.
If $R_{\alpha}$ is the rotation $x\mapsto x+\alpha$ (one could start
with two reflections whose composition is such a rotation) then the
covering volume $V\left(n,\alpha\right)$ if the words of length at
most $n$ (i.e.\ $S_{\alpha}\left(n\right)=\left\{ R_{\alpha}^{j};j=1,\ldots,n\right\} $)
is 
\[
V\left(n,\alpha\right)=\sup_{{I\cap S_{\alpha}\left(n\right)=\varnothing\atop I\text{ an interval}}}\left|I\right|.
\]
In \cite{graham1968distribution} it is shown that
\[
\overline{\lim\limits _{\mathclap{n\rightarrow\infty}}}\ V\left(n,\alpha\right)\left|S_{\alpha}\left(n\right)\right|\geq1+\frac{2}{\sqrt{5}}
\]
with equality iff $\alpha=\frac{a\varphi+b}{c\varphi+d}$ with $\varphi$
the golden ratio and $\left(\begin{smallmatrix}a & b\\
c & d
\end{smallmatrix}\right)\in GL_{2}\left(\mathbb{Z}\right)$. Moreover, the continued fraction algorithm allows one to find efficiently
the $j\leq n$ such that $R_{\varphi}^{j}$ best approximates a given
$\xi\in U\left(1\right)$.

We end the Introduction with a brief outline of the paper. In Section
\ref{sec:Sums-of-squares} the computational complexity results connected
with (\ref{eq:4-squares-intro}) and Task \ref{task:main-task} are
established. In Section \ref{subsec:Spectral-gap-and-SA} the optimal
covering properties of solutions to (\ref{eq:4-squares-intro}) and
its generalizations to number rings are proven. Section \ref{subsec:Quaternions-and-Ramanujan}
applies these results to unit groups of quaternions verifying the
advertised properties of Golden and Super-Golden gate sets. Section
\ref{sec:Super-Golden-Gates} is devoted to the construction of such
gate sets. Finally in Section \ref{sec:Iwahori-Hecke-operators} we
examine some semigroups and asymmetric random walks associated with
Super-Golden gates, and use the examples of Section \ref{sec:Super-Golden-Gates}
to construct Ramanujan Cayley digraphs.
\begin{acknowledgement*}
The authors thank O.\ Regev and J.\ Vrondák for for illuminating
discussions on integer programming and NP-completeness.
\end{acknowledgement*}

\section{\label{sec:Sums-of-squares}Sums of squares and complexity}

\subsection{\label{subsec:Sums-of-two}Sums of two squares}

The algorithm for navigating $G$ using Golden Gates is based on solving
the simplest quadratic Diophantine inequalities. We begin with the
setting of the integers $\mathbb{Z}$. $n=p_{1}^{e_{1}}\ldots p_{k}^{e_{k}}$
is a sum of two squares:
\begin{equation}
n=x^{2}+y^{2},\label{eq:sum-2-squares}
\end{equation}
iff each odd prime factor $p_{j}$ of $n$ with $e_{j}$ odd is congruent
to $1\Mod{4}$.

\begin{task}\label{task:2-squares}To solve (\ref{eq:sum-2-squares})
efficiently.\end{task}

By efficiently we mean in polynomial time in the input. In this case
the input is $n$ in $\mathbb{N}$ and specifying $n$ requires $\log n$
bits, so the Task is to find $x$ and $y$ satisfying (\ref{eq:sum-2-squares})
in $\left(\log n\right)^{c}$ steps, for some fixed $c$. More generally
denote by $h\left(\alpha\right)$ the height of a rational number
$\alpha=\frac{a}{b}$, that is $\max\left\{ \log\left|a\right|,\log\left|b\right|\right\} $,
so that $h\left(\alpha\right)$ measures the number of bits in $\alpha$.
As discussed in the introduction we assume throughout that factoring
an $n\in\mathbb{N}$ can be done efficiently. The following is due
to Schoof \cite{Schoof1985}.
\begin{thm}
Task \ref{task:2-squares} has an efficient resolution.
\end{thm}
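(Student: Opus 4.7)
The plan is to reduce the representation of $n$ to that of its prime divisors via the multiplicativity of the norm on the Gaussian integers $\mathbb{Z}[i]$. Under the standing hypothesis that integer factorization can be carried out in polynomial time, factor $n = 2^{e_{0}}\prod_{j} p_{j}^{e_{j}}$ and verify that every $p_{j}\equiv 3\pmod{4}$ has even exponent; otherwise no representation exists and the task is resolved. The case $2 = 1^{2}+1^{2}$ and $p^{2k} = (p^{k})^{2}+0^{2}$ for $p\equiv 3\pmod{4}$ are immediate, and representations of the remaining prime factors are combined through the Brahmagupta--Fibonacci identity $(a^{2}+b^{2})(c^{2}+d^{2}) = (ac-bd)^{2}+(ad+bc)^{2}$, which encodes multiplication in $\mathbb{Z}[i]$.

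The problem therefore reduces to: given a prime $p\equiv 1\pmod{4}$, exhibit $x,y$ with $p = x^{2}+y^{2}$ in $\mathrm{poly}(\log p)$ time. This splits naturally into two steps. First, once a square root $s$ of $-1$ modulo $p$ is in hand, the Hermite--Serret--Cornacchia descent---running the Euclidean algorithm on $p$ and $s$ and halting at the first remainder $r<\sqrt{p}$---produces the representation in $O(\log^{2} p)$ bit operations. Second, and this is the heart of the matter, one must produce $s$ with $s^{2}\equiv -1\pmod{p}$ deterministically in time polynomial in $\log p$.

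This last step is the main obstacle, since the standard randomized square-root routines (Tonelli--Shanks, Cipolla) are not deterministic, and no deterministic polynomial-time algorithm is known for extracting general square roots modulo $p$. Schoof's insight in \cite{Schoof1985} is to bypass this by exploiting the CM elliptic curve $E: y^{2} = x^{3}-x$ over $\mathbb{F}_{p}$, whose endomorphism ring contains $\mathbb{Z}[i]$ via the automorphism $(x,y)\mapsto (-x,\zeta y)$ with $\zeta^{2}\equiv -1\pmod{p}$; so the mere act of writing this CM down already produces $\sqrt{-1}$ once $\zeta$ is identified. Schoof's point-counting algorithm computes $\#E(\mathbb{F}_{p}) = p+1-a$ in $\mathrm{poly}(\log p)$ time by determining $a\bmod \ell$ for a set of small primes $\ell = O(\log p)$ through the Frobenius action on the $\ell$-torsion $E[\ell]$ and reassembling via the Chinese Remainder Theorem; since for $p\equiv 1\pmod 4$ the Frobenius lies in $\mathbb{Z}[i]$ and satisfies $\pi\bar\pi = p$, writing $\pi$ explicitly as $a+bi$ yields $p = a^{2}+b^{2}$ on the nose, and hence also $\sqrt{-1}\pmod p$. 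Combined with the reduction above, this completes the efficient resolution of Task~II.
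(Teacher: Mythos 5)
Your proposal is correct and follows essentially the same route as the paper: factor $n$ (using the standing assumption of efficient factorization), reduce to representing each prime $p\equiv 1\pmod 4$ as a sum of two squares via Schoof's deterministic CM elliptic-curve algorithm \cite{Schoof1985}, then multiply the pieces together in $\mathbb{Z}[i]$. You unpack the internals of Schoof's method (Cornacchia descent, point counting on $y^2=x^3-x$), which the paper simply cites as a black box, but the decomposition and combination steps are the same.
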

\begin{proof}
First factor $n=2_{\phantom{1}}^{t_{\vphantom{0}}}p_{1}^{e_{1}}\ldots p_{k}^{e_{k}}$
(note $k$ and $e_{j}$ are $O\left(\log n\right)$) and for each
odd $e_{j}$ with $p\equiv1\Mod{4}$, solve 
\begin{equation}
x_{j}^{2}+y_{j}^{2}=p_{j}\label{eq:pj-2-squares}
\end{equation}
\cite{Schoof1985} gives a $O\left(\left(\log p_{j}\right)^{9}\right)$
algorithm to solve (\ref{eq:pj-2-squares}). One could also proceed
instead with various random (running time) algorithms for solving
(\ref{eq:pj-2-squares}) which run even faster and work very well
in practice \cite[§1.5.1]{Cohen1993courseincomputational}. With $x_{j}+iy_{j}\in\mathbb{Z}\left[\sqrt{-1}\right]$
at hand we can find a solution to (\ref{eq:sum-2-squares}) by taking
\[
x+iy:=\left(1+i\right)^{t}\prod_{e_{j}\text{ odd}}\left(x_{j}+iy_{j}\right)^{e_{j}}\prod_{e_{j}\text{ even}}p^{e_{j}/2},
\]
as $N\left(x+iy\right):=\left(x+iy\right)\left(x-iy\right)=n$. 

If we modify Task \ref{task:2-squares} by adding an inequality, the
complexity of the problem changes dramatically.
\end{proof}
\begin{task}\label{task:2-squares-ineq}Given $n\in\mathbb{N}$;
$\alpha,\beta\in\mathbb{Q}$ find a solution to (\ref{eq:sum-2-squares})
with $\alpha\leq\frac{y}{x}\leq\beta$.\end{task}

Task \ref{task:2-squares-ineq} is plainly in NP, that is to say one
can recognize a solution efficiently if one is presented with $x$
and $y$.
\begin{thm}
\label{thm:Task-II-NP}Task \ref{task:2-squares-ineq} is NP-complete
under a randomized reduction.
\end{thm}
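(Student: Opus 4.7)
The plan is to give a randomized polynomial-time reduction from signed \textsc{Subset-Sum} -- given positive integers $s_1,\dots,s_k$ and a target $T'\in\mathbb{Z}$, decide whether some $\eta\in\{\pm 1\}^k$ satisfies $\sum_j\eta_j s_j=T'$ -- to the decision version of Task~\ref{task:2-squares-ineq}. Membership in NP is clear, since a purported witness $(x,y)$ is checked by computing $x^2+y^2$ and comparing $y/x$ to the rationals $\alpha,\beta$. The hardness direction rests on the following structural observation: when $n=p_1\cdots p_k$ is a squarefree product of rational primes $p_j\equiv 1\pmod 4$, splitting as $p_j=\pi_j\bar{\pi}_j$ in $\mathbb{Z}[i]$, the representations $x+iy$ with $x^2+y^2=n$ are, up to the unit group $\mathbb{Z}[i]^\times=\{\pm 1,\pm i\}$, in bijection with sign vectors $\eta\in\{\pm 1\}^k$ via
\[
x+iy \;=\; u\cdot\prod_{j=1}^k \pi_j^{(1+\eta_j)/2}\,\bar{\pi}_j^{(1-\eta_j)/2},\qquad u\in\mathbb{Z}[i]^\times,
\]
with argument $\sum_j\eta_j\theta_j$ modulo $\pi/2$, where $\theta_j=\arg(\pi_j)\in(0,\pi/2)$. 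Hence a slope window $[\alpha,\beta]$ translates into an angular window for a signed combination of the $\theta_j$'s, which is precisely the shape of signed \textsc{Subset-Sum}.

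First I would pick a small rational $\delta>0$ with $\delta\sum_j s_j<\pi/16$ (so that all relevant signed combinations of the $\theta_j$ lie safely inside one fundamental sector for the unit action) and an angular tolerance $\eta>0$ with $2k\eta<\delta$ (so that the $2^k$ candidate angular sums $\delta\sum_j\eta_j s_j$ have pairwise disjoint $k\eta$-neighbourhoods, using that consecutive attainable values of $\sum_j\eta_j s_j$ differ by at least $2$). Next, for each $j$, I would locate a Gaussian prime $\pi_j$ above a rational prime $p_j\equiv 1\pmod 4$ with
\[
\bigl|\arg(\pi_j)-\delta s_j\bigr|<\eta,
\]
insisting that the $p_j$'s be pairwise distinct. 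This is the step that demands randomization. By Hecke's equidistribution theorem for arguments of Gaussian primes, the density of such primes in any angular window of width $2\eta$ among Gaussian integers of norm $\le N$ is of order $\eta/\log N$; so sampling $\mathrm{poly}(1/\eta,\log N)$ random Gaussian integers of norm $\le N$ and testing for primality and angle succeeds with overwhelming probability. With $1/\eta$ and $\log N$ polynomial in the \textsc{Subset-Sum} input size, the whole selection runs in randomized polynomial time. The reduction then outputs $n:=\prod_j p_j$ and rationals $\alpha,\beta$ approximating $\tan(\delta T'\mp k\eta)$ to sufficient precision, which is elementary.

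Correctness is symmetric. A \textsc{Subset-Sum} solution $\eta$ yields a Gaussian integer whose argument is within $k\eta$ of $\delta T'$, hence slope inside $[\alpha,\beta]$. Conversely, given a Task~\ref{task:2-squares-ineq} witness $(x,y)$ one factors $x+iy$ in $\mathbb{Z}[i]$ using the known $\pi_j$'s -- polynomial time once the factorization of $n$ is known -- to read off a sign vector $\eta$, and the separation $2k\eta<\delta$ forces $\sum_j\eta_j s_j=T'$. The fourfold unit-group ambiguity is neutralized by ensuring that $\delta T'$ is kept away from the boundary angles $0,\pi/4,\pi/2,\dots$, so that only $u=1$ places the argument of $z$ in the prescribed window.

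The main obstacle is the quantitative one: to actually produce Gaussian primes in an angular window of width $\eta\sim 1/\mathrm{poly}(\text{input})$ among primes of norm $\le\exp(\mathrm{poly})$, and fast enough that rejection sampling terminates in expected polynomial time. Effective equidistribution for Hecke characters (through zero-free regions of their $L$-functions, or unconditionally on GRH) supplies what is needed, and it is exactly this reliance on a probabilistic prime-finder that forces the reduction to be randomized rather than deterministic -- which matches the statement of the theorem. All remaining ingredients (rational approximation of $\tan(\delta T')$, Gaussian factorization given the factorization of $n$, and rejection of collisions among the $p_j$'s) are standard.
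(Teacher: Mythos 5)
Your proposal is correct and takes essentially the same route as the paper: both reduce from Subset-Sum/Knapsack by encoding the $s_j$'s as arguments of Gaussian primes sitting in narrow angular windows, observe that representations of the squarefree product $n=\prod p_j$ as a sum of two squares correspond to sign vectors via the factorization $z=u\prod\pi_j^{\pm1}$, and use randomized sampling (with Hecke-type equidistribution of $\arg P$, as in Kubilius) to locate the required primes, which is exactly what makes the reduction randomized. The only cosmetic difference is that you start from the signed variant of Subset-Sum while the paper starts from the $\{0,1\}$ version and performs the change of variables $\eta_j=2\varepsilon_j-1$ internally.
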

What this says is that under a randomized reduction (see below) if
Task \ref{task:2-squares-ineq} has an efficient solution then P=NP.
So for us the upshot is that unlike factoring, Task \ref{task:2-squares-ineq}
is genuinely hard.
\begin{proof}
The Diophantine approximation condition in Task \ref{task:2-squares-ineq}
can be replaced by \ref{task:2-squares-ineq}': find $x+iy\in\mathbb{Z}\left[\sqrt{-1}\right]$,
$N\left(x+iy\right)=n$ and $\alpha'\leq\arg\left(x+iy\right)\leq\beta'$.
We show that if \ref{task:2-squares-ineq}' can be resolved efficiently
then so can the long known to by NP-complete subsum (or 'Knapsack'
as it is called in \cite{karp1972reducibility}) problem:

\begin{task}\label{task:knapsack}Given $t_{1},t_{2},\ldots,t_{n},t\in\mathbb{N}$,
are there $\varepsilon_{j}\in\left\{ 0,1\right\} $, $j=1,\ldots,k$
such that 
\[
\sum_{j=1}^{k}\varepsilon_{j}t_{j}=t.
\]
\end{task}

Note that the bit content in Task \ref{task:knapsack} is $H=\sum_{j=1}^{k}\log t_{j}$
and so we seek a $O\left(H^{c}\right)$ steps algorithm ($c$-fixed).
Let $M=\left\lfloor 100k\max_{j}\left\{ t_{j},t\right\} \right\rfloor $
and $\tau=\sum_{j=1}^{k}t_{j}$. We seek primes $P_{j}$ in the ring
$\mathbb{Z}\left[\sqrt{-1}\right]$ with $N\left(P_{j}\right)=X$
and $\arg\left(P_{j}\right)$ near $t_{j}/M$, $j=1,\ldots,k$, and
$X$ to be chosen. According to \cite{kubilius1955problem} the number
of primes $P$ with $N\left(P\right)\leq X$ and $\left|\arg\left(P\right)-\beta\right|\leq X^{-\frac{1}{10}}$
is asymptotic to 
\[
\frac{X}{\log X}\frac{X^{-\frac{1}{10}}}{\pi}\text{ as \ensuremath{X\rightarrow\infty}, and}
\]
 uniformly for any $\beta$. Hence for each $j=1,\ldots,k$ there
is $P_{j}$ with $N\left(P_{j}\right)\leq\left(100kM\right)^{10}$
and for which
\[
\left|\mathrm{Arg}\left(P_{j}\right)-\frac{t_{j}}{M}\right|\leq\frac{1}{100kM},
\]
(where $\mathrm{Arg}\left(z\right)\in\left(-\frac{\pi}{2},\frac{\pi}{2}\right]$).

We discuss how to find $P_{j}$ efficiently below, this leading to
the randomized reduction part. For $\varepsilon_{j}\in\left\{ 0,1\right\} $
and $\eta_{j}=2\varepsilon_{j}-1\in\left\{ -1,1\right\} $ we have
\begin{align*}
2\left[\sum_{j=1}^{k}\frac{\varepsilon_{j}t_{j}}{M}-\frac{t}{M}\right] & =\sum_{j=1}^{k}\eta_{j}\mathrm{Arg}\left(P_{j}\right)+\frac{\tau-2t}{M}+\sum_{j=1}^{k}\eta_{j}\left[\frac{t_{j}}{M}-\mathrm{Arg}\left(P_{j}\right)\right]\\
 & =\sum_{j=1}^{k}\eta_{j}\mathrm{Arg}\left(P_{j}\right)+\frac{\tau-2t}{M}+\left(\Theta\right)
\end{align*}
where 
\[
\left|\left(\Theta\right)\right|\leq\frac{1}{100M}.
\]
Hence 
\[
\sum_{j=1}^{k}\varepsilon_{j}t_{j}=1\quad\text{ iff }\quad\left|\sum_{j=1}^{k}\eta_{j}\mathrm{Arg}\left(P_{j}\right)-\frac{2t-\tau}{M}\right|\leq\frac{1}{100M}.
\]
Let 
\[
n=N\left(P_{1}\right)N\left(P_{2}\right)\ldots N\left(P_{k}\right)
\]
and 
\[
P_{j}=x_{j}+iy_{j}\text{ with }x_{j}>0.
\]
Then $z=x+iy$ solves Task \ref{task:2-squares-ineq}' with 
\[
\frac{2t-\tau}{M}-\frac{1}{100M}\leq\mathrm{Arg}z\leq\frac{2t-\tau}{M}+\frac{1}{100M}
\]
iff $z=P_{1}^{\sigma\left(\eta_{1}\right)}\ldots P_{k}^{\sigma\left(\eta_{k}\right)}$
and $\sum_{j=1}^{k}\varepsilon_{j}t_{j}=t$, where $\sigma\left(\eta_{j}\right)=\mathrm{id}$
if $\eta_{j}=1$ and $\sigma\left(\eta_{j}\right)$ is complex conjugation
if $\eta_{j}=-1$.

Note that $h\left(M\right)=O\left(H\right)$ and 
\[
h\left(n\right)\leq\sum_{j=1}^{k}h\left(N\left(P_{j}\right)\right)\leq k\log\left(100kM\right)^{10}\ll H^{2},
\]
so that the input for Task \ref{task:2-squares-ineq}' is polynomial
in terms of that of Task \ref{task:2-squares-ineq}. It remains to
find $P_{1},\ldots,P_{k}$ efficiently. That is to find a prime $P$
in the sector $N\left(P\right)\leq X$ and $\arg\left(P\right)\in S_{X}$
where $\left|S_{X}\right|=X^{-\frac{1}{10}}$ and $h\left(X\right)=H$.
If we choose a random $\beta\in\mathbb{Z}\left[\sqrt{-1}\right]$
in this section the probability that it is prime is $\nicefrac{1}{\log X}$.
Thus sampling such $\beta$'s and checking if they are prime will
produce the requisite $P$ in polynomial $H$ steps. In this way we
produce the $P_{j}$'s using a randomized polynomial time procedure.
Once the $P_{j}$'s are determined in polynomial $H$ steps then so
is $n$. Thus Task \ref{task:2-squares-ineq} is reduced to Task \ref{task:2-squares-ineq}'
albeit by a randomized reduction.
\end{proof}
\begin{rem}
A similar NP-complete problem is discussed in \cite{Sudan2010}. It
asserts that the task: Given $n,\alpha,\beta$ to find integers $x,y$
such that $xy=n$ and $\alpha\leq x\leq\beta$; is NP-complete under
a randomized reduction (see discussion \cite{np-complete-variant-of-factoring}).
\end{rem}

\subsection{\label{subsec:Sums-of-four}Sums of four squares}

The Diophantine approximation problem that is directly connected to
our navigation of $G$ is that of four squares,
\begin{equation}
x_{1}^{2}+x_{2}^{2}+x_{3}^{2}+x_{4}^{2}=n.\label{eq:4-squares}
\end{equation}
It is well known \cite{Lagrange1770Demonstrationduntheoreme} that
for every $n\geq0$ (\ref{eq:4-squares}) has integral solutions.
The question of finding a solution efficiently is discussed in \cite{Rabin1986Randomizedalgorithmsin}.
A randomized efficient algorithm to do this is to choose $x_{3}$
and $x_{4}$ with $x_{3}^{2}+x_{4}^{2}\leq n$ at random and to check
if $n-\left(x_{3}^{2}+x_{4}^{2}\right)$ is a prime $p\equiv1\Mod{4}$.
The last can be done efficiently (\cite{agrawal2004primes}) and if
yes, then Schoof gives $x_{1}$ and $x_{2}$ and we have a solution
to (\ref{eq:4-squares}). If $n-\left(x_{3}^{2}+x_{4}^{2}\right)$
is not such a prime we repeat with another choice of $x_{3},x_{4}$.
The probability of success is $\nicefrac{1}{\log n}$ in view of the
density of primes, and this leads to a randomized algorithm.

For the approximation problem we project the solutions $x=\left(x_{1},x_{2},x_{3},x_{4}\right)$
of (\ref{eq:4-squares}) onto $S^{3}=\left\{ \xi\in\mathbb{R}^{4}\,\middle|\,\left|\xi\right|=1\right\} $
by sending $x$ to
\begin{equation}
\widetilde{x}=\frac{x}{\sqrt{n}}.\label{eq:xtilde}
\end{equation}
$S^{3}$ comes with its round metric, relative to which all measurements
are made. So for example $B_{r}\left(\xi\right)$ is the ball centered
at $\xi$ of radius $r$.

\begin{task}\label{task:ball}Given $n,\xi,\varepsilon$ to find
a solution of (\ref{eq:4-squares}) with $\widetilde{x}\in B_{\varepsilon}\left(\xi\right)$.
\end{task}
\begin{rem}
In terms of the bit content or height $h$ we take $\varepsilon$
and $\xi$ to be rational and $h\left(\xi\right)=\max\left\{ h\left(\xi_{j}\right)\,\middle|\,j=1,2,3,4\right\} $.
\end{rem}
Task \ref{task:ball} is clearly in NP. We use Theorem \ref{thm:Task-II-NP}
to show:
\begin{thm}
\label{thm:Task-IV-NP}Task \ref{task:ball} is NP-complete under
a randomized reduction.
\end{thm}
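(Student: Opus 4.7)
The plan is to reduce Task~II (NP-complete under randomised reduction by Theorem~\ref{thm:Task-II-NP}) to Task~\ref{task:ball}. I would embed the unit circle as the equator of $S^{3}$ via $(a,b)\mapsto(a,b,0,0)$, so that an arc-constrained Gaussian integer of norm $n$ becomes a ball-constrained integer sum-of-four-squares representation of $n$ lying on that equator.

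Given a Task~II instance $(n,\alpha,\beta)$, I would construct the Task~IV instance $(n,\xi,\varepsilon)$, setting $\xi=(\xi_{1},\xi_{2},0,0)\in S^{3}$ to be a rational approximation of the midpoint $(\cos\theta_{0},\sin\theta_{0},0,0)$ of the target arc $\{(\cos\theta,\sin\theta):\tan\theta\in[\alpha,\beta]\}$, and choosing $\varepsilon\in\mathbb{Q}_{>0}$ carefully. The crucial observation is that whenever $\varepsilon<1/\sqrt{n}$, any integer solution $(x_{1},x_{2},x_{3},x_{4})$ of $x_{1}^{2}+\cdots+x_{4}^{2}=n$ with $\widetilde{x}\in B_{\varepsilon}(\xi)$ satisfies $|x_{j}|\le\varepsilon\sqrt{n}<1$ for $j=3,4$, forcing $x_{3}=x_{4}=0$ by integrality. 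Then $(x_{1},x_{2})$ is a Gaussian integer of norm $n$ whose normalisation $(x_{1},x_{2})/\sqrt{n}$ sits within arc-distance $\varepsilon$ of $(\xi_{1},\xi_{2})$ on the equator $S^{1}$, yielding a Task~II solution. Conversely any Task~II solution $(x,y)$ lifts to $(x,y,0,0)\in B_{\varepsilon}(\xi)$ whenever $\varepsilon$ is at least the arc half-width.

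The hard part will be reconciling the two bounds on $\varepsilon$: we need $\varepsilon<1/\sqrt{n}$ (to force $x_{3}=x_{4}=0$) together with $\varepsilon\ge w/2$ (to cover Task~II solutions), where $w=\beta-\alpha$ measures the arc width. For the Task~II instances produced from Knapsack in the proof of Theorem~\ref{thm:Task-II-NP} one has $w\sim 1/M$ while $\sqrt{n}$ can be as large as $\Theta((100kM)^{5k})$, so the two bounds are incompatible by a wide margin and a direct reuse of that reduction fails. My approach to handle this is to perform the reduction \emph{directly} from the Knapsack problem: I would choose the Gaussian primes $P_{j}$ with arguments in sectors of width $O(1/\sqrt{n})$ around $t_{j}/M$ (rather than $1/(100kM)$), so that the product $\prod P_{j}^{\eta_{j}}$ encoding a subset-sum solution lies within $O(1/\sqrt{n})$ of the target angle $\theta_{0}$. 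The existence of primes in such narrow sectors is supplied by Kubilius-type density estimates in the Gaussian integers, and randomised sampling locates them in expected polynomial time, exactly as in the proof of Theorem~\ref{thm:Task-II-NP}. With this refinement the Task~IV instance has $\varepsilon\asymp 1/\sqrt{n}$ matched to the arc width, and the equator embedding is admissible.

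Finally, I would verify that the heights $h(n)$, $h(\xi)$, $h(\varepsilon)$ remain polynomial in the Knapsack input size $H$ and that the randomised prime sampling runs in expected polynomial time in $H$, so the whole reduction is randomised polynomial time. Since membership in NP is clear (a witness $(x_{1},x_{2},x_{3},x_{4})$ is verified by checking one quadratic equality and the rational inequality $\widetilde{x}\cdot\xi\ge 1-\varepsilon^{2}/2$), this establishes Task~\ref{task:ball} as NP-complete under randomised reduction.
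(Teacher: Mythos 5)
Your equatorial embedding $(a,b)\mapsto(a,b,0,0)$ with $\varepsilon<1/\sqrt{n}$ to force $x_{3}=x_{4}=0$ is a natural idea, and you correctly identify the tension between $\varepsilon<1/\sqrt{n}$ and the target arc width as the crux. But the proposed repair fails for a quantitative reason: you cannot locate Gaussian primes in sectors of width $O(1/\sqrt{n})$. If $n=\prod_{j=1}^{k}N(P_{j})$ with $N(P_{j})\le X$, then $1/\sqrt{n}$ can be as small as $X^{-k/2}$, whereas the average angular spacing of Gaussian primes of norm up to $X$ is $\asymp(\log X)/X$, and the Kubilius sector-density result only guarantees a prime in a sector of width about $X^{-1/10}$ (GRH improves this only to roughly $X^{-1/2+\epsilon}$). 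For a sector of width $cX^{-k/2}$ to reliably contain a prime one needs $cX^{1-k/2}\gg\log X$, which forces $k\le 2$; but a Knapsack instance with at most two items is trivially decidable, so no NP-hardness is obtained. In short, the constraint $\varepsilon<1/\sqrt{n}$ couples the ball radius rigidly to the arithmetic scale $1/\sqrt{n}$, and that scale sits below the resolution at which even a single Gaussian prime can be placed, let alone $k$ of them chosen to encode a subset-sum instance. This is a genuine gap, not one that can be closed by adjusting constants or invoking stronger prime-density estimates.

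The paper's argument avoids the coupling entirely by not working on the equator. It reduces $k$-squares to $(k+1)$-squares: given the modulus $n$ from a Task~\ref{task:2-squares-ineq}' instance, choose an integer $y_{3}\in(n/2,n)$ and set $m=y_{3}^{2}+n$, so that $y_{3}^{2}<m<(y_{3}+1)^{2}$. Among all integer solutions of $x_{1}^{2}+x_{2}^{2}+x_{3}^{2}=m$, the projections $\widetilde{x}=x/\sqrt{m}$ nearest the pole $(0,0,1)$ are exactly those with $x_{3}=y_{3}$, hence with $x_{1}^{2}+x_{2}^{2}=n$. Taking the ball center to be $\eta(\alpha_{1},\alpha_{2},0)+(0,0,1)$ for small $\eta>0$, the angular target (governed by $\eta$) and the minimal polar distance (governed by $\sqrt{m}-y_{3}$, which can be made as small as one likes by pushing $y_{3}$ close to $\sqrt{m}$) become \emph{independent} parameters, which is exactly the decoupling your equator embedding lacks. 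Iterating from $k=2$ up to $k=4$ then gives the theorem, with Theorem~\ref{thm:Task-II-NP} supplying the base case. Any direct Task~\ref{task:2-squares-ineq}~$\rightarrow$~Task~\ref{task:ball} reduction would have to manufacture this separation of scales by some other device.
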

\begin{proof}
While our interest is four squares, our proof of this is inductive.
We formulate Task \ref{task:ball} for a sum of $k$-squares. For
$k=2$ Theorem \ref{thm:Task-II-NP} asserts what is claimed here.
If Theorem \ref{thm:Task-IV-NP} is true for $k$ it is true for $k+1$.
We explain the case from 2 to 3, the general case is similar. Let
$n,\alpha',\beta'$ be the input for Task \ref{task:2-squares-ineq}'.
We show how to resolve it efficiently assuming Task \ref{task:ball}
for $x_{1}^{2}+x_{2}^{2}+x_{3}^{2}=m$ can be done efficiently. Choose
$\frac{n}{2}<y_{3}<n$ and let $m=y_{3}^{2}+n$, so $y_{3}^{2}<m<\left(y_{3}+1\right)^{2}$.
Hence for any $x=\left(x_{1},x_{2},x_{3}\right)$ with $x_{1}^{2}+x_{2}^{2}+x_{3}^{2}=m$
\[
d^{2}\left(\widetilde{x},\left(0,0,1\right)\right)=\frac{2\left(\sqrt{m}-x_{3}\right)}{m}\geq\frac{2\left(\sqrt{m}-y_{3}\right)}{m}
\]
with equality iff $x_{3}=y_{3}$. In particular those $\widetilde{x}$'s
which are all the closest solutions to $\left(0,0,1\right)$, correspond
exactly to solutions of
\[
x_{1}^{2}+x_{2}^{2}=n\quad\text{and}\quad x_{3}=y_{3}.
\]
So if we seek the solution $x=\left(x_{1,}x_{2}\right)\in S^{1}$
to (\ref{eq:sum-2-squares}) with $\frac{x}{\left|x\right|}$ closest
to $\alpha=\left(\alpha_{1},\alpha_{2}\right)\in S^{1}$, we can do
so by determining the $x=\left(x_{1},x_{2},x_{3}\right)$ with $x_{1}^{2}+x_{2}^{2}+x_{3}^{2}=m$
and for which $\widetilde{x}$ is closest to $\eta\left(\alpha_{1},\alpha_{2},0\right)+\left(0,0,1\right)$,
for $\eta$ small enough and positive. One checks that this determination
is efficient and hence it follows that an efficient solution of Task
\ref{task:ball} for $k=3$, yields one for $k=2$.
\end{proof}
While Theorem \ref{thm:Task-IV-NP} limits what one can do efficiently
in general, the good news is that special cases of this task can be
done. For $\xi=\left(\xi_{1},\xi_{2},0,0\right)$, Ross and Selinger
\cite{ross2015optimal} give an algorithm (see also \cite{Ross2015Optimalancillafree,blass2015optimal}).
\begin{thm}[Ross-Selinger]
\label{thm:ross-selinger}There is a heuristic efficient algorithm
to solve Task \ref{task:ball} for $\xi=\left(\xi_{1},\xi_{2},0,0\right)$.
\end{thm}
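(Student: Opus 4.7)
The plan is to reduce Task~\ref{task:ball} for $\xi=(\xi_1,\xi_2,0,0)$ to the sum-of-two-squares algorithm of Section~\ref{subsec:Sums-of-two} via a planar grid search, following \cite{ross2015optimal}. Writing $u := x_1 + ix_2$ and $t := x_3 + ix_4$, equation (\ref{eq:4-squares}) reads $|u|^2 + |t|^2 = n$, and setting $\zeta := \xi_1 + i\xi_2 \in S^1$ the constraint $\widetilde{x} \in B_\varepsilon(\xi)$ translates to $|u - \sqrt{n}\,\zeta|^2 + |t|^2 \leq \varepsilon^2 n$. In particular $u$ must lie in a planar region $R \subset \mathbb{C}$ around $\sqrt{n}\,\zeta$ of area $\asymp \varepsilon^2 n$, while $|t|$ is then forced to be of size $O(\varepsilon\sqrt{n})$.

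The algorithm then searches candidate Gaussian integers $u \in R \cap \mathbb{Z}[i]$: for each such $u$, set $m := n - |u|^2 \geq 0$, factor $m$ (in polynomial time under the factoring hypothesis), and test the classical two-squares criterion that every rational prime $p \equiv 3 \pmod{4}$ dividing $m$ does so to an even power. If the test passes, invoke Schoof's algorithm from Section~\ref{subsec:Sums-of-two} to produce $t \in \mathbb{Z}[i]$ with $|t|^2 = m$, and return $(x_1,x_2,x_3,x_4)$; otherwise move on to the next candidate.

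Correctness is essentially automatic, since any valid solution of Task~\ref{task:ball} determines such a $u$, and conversely any $u \in R \cap \mathbb{Z}[i]$ for which $m = n - |u|^2$ is a sum of two squares produces a solution. For efficiency, the heuristic assumption is that as $u$ ranges over $R \cap \mathbb{Z}[i]$ the integers $n - |u|^2$ behave like random integers of their size; by Landau's theorem the density of sums of two squares in this range is $\asymp (\log n)^{-1/2}$, so $O(\sqrt{\log n})$ samples will suffice on average. Each trial costs one factorization plus one call to Schoof, both polynomial in $\log n$, giving a total expected running time $\mathrm{poly}(\log n, h(\xi), \log(1/\varepsilon))$.

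The main obstacle, and the source of the qualifier ``heuristic'' in the statement, is the impossibility of proving rigorously that the factorizations of $n - |u|^2$ are random as $u$ varies in $R \cap \mathbb{Z}[i]$; this is a classical open problem of analytic number theory. The technical details of sampling $u$ efficiently from $R$ (so that one does not enumerate all $\asymp \varepsilon^2 n$ lattice points, but rather zooms in on promising candidates via continued fractions / a one-dimensional grid problem) are the content of \cite{ross2015optimal}, with the refinements in \cite{Ross2015Optimalancillafree,blass2015optimal} handling variants and tightening the constants; their analysis transfers to the present $\mathbb{Z}[i]$ setting with no essential change.
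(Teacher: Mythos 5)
Your proposal matches the paper's discussion essentially point for point: both decouple the Diophantine condition into a planar lattice-point search over the convex ``meniscus'' region for $(x_1,x_2)$ and a sum-of-two-squares test for $n-(x_1^2+x_2^2)$, both invoke Schoof plus factoring for the latter, and both rest on the same heuristic that the residuals behave like random integers with Landau density $\asymp (\log n)^{-1/2}$. The only cosmetic divergence is that the paper explicitly invokes Lenstra's integer convex programming to enumerate the lattice points of condition $(A)$, whereas you defer to the one-dimensional grid-problem/continued-fraction subroutine of Ross--Selinger; these play the same role and the approach is the same.
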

\begin{disc*}The precise meaning of heuristic will be clarified in
what follows. An important extra feature is that the algorithm can
and has, been implemented (\cite{ross2015optimal}) and that it runs
and terminates quickly. A similar algorithm has been devised and implemented
in the $p$-adic setting, namely that of navigating related Ramanujan
graphs in \cite{Petit2008FullcryptanalysisLPS} and \cite{Sardari2017Complexitystrongapproximation}.

The key property as far as these special $\xi$'s go is that for $x$
solving (\ref{eq:4-squares})
\[
d^{2}\left(\xi,\widetilde{x}\right)=2\left[1-\frac{\xi_{1}x_{1}+\xi_{2}x_{2}}{\sqrt{n}}\right],
\]
and this is a linear constraint depending on $x_{1}$ and $x_{2}$
only. Hence $\widetilde{x}\in B_{\varepsilon^{2}}\left(\xi\right)$
is equivalent to
\begin{elabeling}{00.00.0000}
\item [{$\left(A\right)$\hspace*{\fill}$\dfrac{\xi_{1}x_{1}+\xi_{2}x_{2}}{\sqrt{n}}>1-\dfrac{\varepsilon}{2}\ ;\quad x_{1}^{2}+x_{2}^{2}\leq n$\hspace*{\fill}}]~
\item [{and}]~
\item [{$\left(B\right)$\hspace*{\fill}$n-\left(x_{1}^{2}+x_{2}^{2}\right)=x_{3}^{2}+x_{4}^{2}.$\hspace*{\fill}}]~
\end{elabeling}
In this way tasks $\left(A\right)$ and $\left(B\right)$ are decoupled,
and we proceed by first finding candidate solution to $\left(A\right)$
and then solving for $\left(B\right)$. Now $\left(A\right)$ is a
problem of finding integer lattice points $\left(x_{1},x_{2}\right)$
in $\mathbb{Z}^{2}$ which lie in the convex 'miniscus' region defined
by the inequalities in $\left(A\right)$. Listing the solutions one
at a time (which is done with polynomial cost) is in P. The algorithm
to do this is due to Lenstra \cite{lenstra1983integer} and applies
to such integer convex programming problems for $\mathbb{Z}^{k}$
in any \emph{fixed} dimension $k$ (see \cite{Lovasz1986algorithmictheorynumbers}).
His key input being Minkowski reduction of bases of lattices. If the
input complexity for Task \ref{task:ball} is $H$, then we examine
$O\left(H^{c}\right)$of the solutions to $\left(A\right)$ (in some
geometric ordering or randomly) and for each $\left(x_{1},x_{2}\right)$
we check if $\left(B\right)$ has a solution by running the efficient
solution to Task (\ref{task:2-squares}). If we arrive at a solution
$\left(x_{1},x_{2},x_{3},x_{4}\right)$ then we have resolved Task
\ref{task:ball}. If our $O\left(H^{c}\right)$ (here $c$ is a fixed
number such as $10$) steps cover all the solutions to $\left(A\right)$
and no $\left(x_{3},x_{4}\right)$ is found, then we output that Task
\ref{task:ball} has no solution, which is the case. The only stumbling
block to the algorithm terminating efficiently is that there are more
than a polynomial in $H$ number of solutions to $\left(A\right)$
and our $O\left(H^{c}\right)$ inspections produce no solutions to
$\left(B\right)$, in particular this would happen if in fact there
are no solutions to $\left(B\right)$ among these very many solutions
to $\left(A\right)$. The heuristic aspect of the algorithm is that
this last scenario will not happen. The density of numbers $t$ in
$\left[X,2X\right]$ which are sums of two squares is $\nicefrac{1}{\sqrt{\log X}}$,
so that one expects that $\left(B\right)$ will have a solution with
probability $\nicefrac{1}{\sqrt{H}}$. Thus the probability of hitting
this stumbling block after $O\left(H^{c}\right)$ tries is very small,
and possibly never arises. This completes the analysis and meaning
of Theorem \ref{thm:ross-selinger}.\end{disc*}
\begin{rem}
\label{rem:without-factorization}In Theorem \ref{thm:ross-selinger}
and elsewhere we have assumed that one can factor efficiently. Without
appealing to such a factoring algorithm we can seek an approximation
to $\xi$ as above but in $\left(B\right)$ we require that $n-\left(x_{1}^{2}+x_{2}^{2}\right)$
be a prime $p\equiv1\left(4\right)$. The density of such special
solutions is a bit smaller ($\nicefrac{1}{\log X}$ rather than $\nicefrac{1}{\sqrt{\log X}}$)
and we can find these solutions efficiently.
\end{rem}

\subsection{Sums of squares in number rings}

The coordinates of the Golden Gates Sets lie in the ring of $S$-integers
of certain number fields. This leads us to the examination of the
tasks discussed in Sections \ref{subsec:Sums-of-two} and \ref{subsec:Sums-of-four}
with $\mathbb{Z}$ replaced by the ring of integers $\mathcal{O}_{K}$,
of number fields such as $K=\mathbb{Q}\left(\sqrt{2}\right)$ and
$K=\mathbb{Q}\left(\sqrt{5}\right)$. These are unique factorization
domains and in fact even Euclidean domains. This allows us to extend
the results of the previous sections to these rings. We explicate
what we need for later. Task \ref{task:2-squares} is to solve (\ref{eq:sum-2-squares})
with $x,y\in\mathcal{O}_{K}$ ($K$ fixed) given $n\in\mathcal{O}_{K}$.
By factoring $N\left(n\right)$ (have $N=N_{K/\mathbb{Q}}$ is the
norm) into primes and applying \cite{Schoof1985} to $N_{K/\mathbb{Q}}\left(\alpha\right)=p$,
we obtain an efficient factorization of $n$ into primes in $\mathcal{O}_{K}$.
Thus we are left with dealing with the case that $n=P$, is a totally
positive prime. Using the Euclidean algorithm to compute $\gcd$'s
in $\mathcal{O}_{K}$ we are in the same situation as over $\mathbb{Z}$,
needing to find $\nu\in\nicefrac{\mathcal{\mathcal{O}}}{P}$ with
$\nu^{2}+1\equiv0\Mod{P}$. Again we apply \cite{Schoof1985} and
this yields an efficient solution to Task \ref{task:2-squares} over
$\mathcal{O}_{K}$.

With this we can address Task \ref{task:ball} over $\mathcal{O}_{K}$
in the form of Theorem \ref{thm:ross-selinger}. That is to solve
\begin{equation}
x_{1}^{2}+x_{2}^{2}+x_{3}^{2}+x_{4}^{2}=n\quad\text{with}\quad\widetilde{x}\in B_{\varepsilon}\left(\xi\right),\xi=\left(\xi_{1},\xi_{2},0,0\right).\label{eq:4-squares-pole}
\end{equation}
Here $n$ is totally positive, i.e.\ $n>0$ and $n'>0$ where $'$
is the Galois conjugate of $\nicefrac{K}{\mathbb{Q}}$, and $\widetilde{x}=\frac{x}{\sqrt{n}}\in S^{3}$. 

As before this decouples into
\begin{elabeling}{00.00.0000}
\item [{$\left(A'\right)$\hspace*{\fill}$x_{1},x_{2}\in\mathcal{O}_{K},\quad\dfrac{\xi_{1}x_{1}+\xi_{2}x_{2}}{\sqrt{n}}>1-\frac{\varepsilon}{2}\ ;\quad\begin{matrix}x_{1}^{2}+x_{2}^{2}\leq n\vphantom{\Big|}\\
\left(x_{1}'\right)^{2}+\left(x_{2}'\right)^{2}\leq n'\vphantom{\Big|}
\end{matrix}$\hspace*{\fill}}]~
\item [{and}]~
\item [{$\left(B'\right)$\hspace*{\fill}$n-\left(x_{1}^{2}+x_{2}^{2}\right)=x_{3}^{2}+x_{4}^{2}\quad\text{with}\quad x_{3},x_{4}\in\mathcal{O}_{K}.$\hspace*{\fill}}]~
\end{elabeling}
For $\left(A'\right)$ the points $\left(x_{1},x_{1}',x_{2},x_{2}'\right)$
with $x_{1},x_{2}\in\mathcal{O}_{K}$ form a rank $4$ lattice $L$
in $\mathbb{R}^{4}$ and the inequalities defining the set in which
they lie is a convex (compact) set. Lenstra's algorithm allows us
to find these points efficiently. As in the setting with $\left(A\right),\left(B\right)$,
each instance of $\left(B'\right)$ with a given $x_{1},x_{2}\in\mathcal{O}_{K}$
is Task \ref{task:2-squares} over $\mathcal{O}_{K}$, and so has
an efficient solution. The rest of the analysis is as before. We have
shown that Theorem \ref{thm:ross-selinger} is valid for $\mathcal{O}_{K}$.

\section{\label{sec:strong-approx}Strong approximation for integer points
on spheres}

\subsection{\label{subsec:Spectral-gap-and-SA}Spectral gap and strong approximation}

We are interested in how well the points $\widetilde{x}$ for $x$
satisfying (\ref{eq:4-squares}) or (\ref{eq:4-squares-pole}) cover
$S^{3}$. A powerful method to address this makes use of these points
corresponding to orbits defining Hecke operators, and their eigenvalues
may be estimated (optimally) using the Ramanujan Conjectures (see
\cite{lubotzky1987hecke}).

We formulate a covering estimate in terms of a spectral gap, for a
general compact topological group $L$ as it clarifies the roles of
the ingredients (we will apply it for $G=L=PU\left(2\right)$). Let
$\mu$ be Haar measure on $L$ normalized to be a probability measure.
Let $S$ be a finite subset of $L$ and $\left|S\right|$ its cardinality.
For $B\subseteq L$ with $\mu\left(B\right)>0$, we are interested
in $\bigcup_{s\in S}\left(Bs\right)$. If these right $S$-translates
of $B$ cover all of $L$ we say that the pair $B,S$ covers. If $\mu\left(L\backslash\left(\bigcup_{s\in S}Bs\right)\right)=o\left(1\right)$
as $\left|S\right|\rightarrow\infty$ (our interest is $\left|S\right|\rightarrow\infty$
and we allow everything including $L$ to change) we say that $B,S$
is almost covering. It is clear that to be almost covering we must
have 
\[
\left|S\right|\mu\left(B\right)\geq1+o\left(1\right).
\]
If $S$ is chosen at random (that is i.i.d.\ w.r.t.\ $\mu$) then
\begin{equation}
\left|S\right|\mu\left(B\right)\rightarrow\infty\label{eq:random-almost-cover}
\end{equation}
is necessary and sufficient for almost covering\footnote{This being a well known property of the Coupon Collector's problem.}.

Let $a_{s}\geq0$ with $\sum_{s\in S}a_{s}=1$ and let $\nu_{s}$
be the probability measure on $L$ given by 
\[
\nu_{S,a}:=\sum_{s\in S}a_{s}\delta_{s}
\]
($\delta_{x}$ is a point mass at $x$). $\nu_{s}$ defines a right
convolution operator $T_{S,a}$ on $L^{2}\left(L\right)$ by 
\begin{equation}
T_{S,a}f\left(x\right)=\sum_{s\in S}a_{s}f\left(xs^{-1}\right).\label{eq:Hecke-operator}
\end{equation}
Clearly
\[
T_{S,a}\one=\one;\qquad\left\Vert T_{S,a}\right\Vert =1.
\]
The orthogonal space to $\one$, $L_{0}^{2}\left(L\right)$, is $T_{S,a}$
invariant. Let 
\[
W=W_{S,a}:=\left\Vert T_{S,a}\big|_{L_{0}^{2}\left(L\right)}\right\Vert .
\]
$W\leq1$ and we estimate the covering properties of $S$ in terms
of the spectral norm $W$.
\begin{prop}
\label{prop:spec-norm-covering}For $B$ and $S$ as above $\mu\left(L\backslash\left(\bigcup_{s\in S}Bs\right)\right)\leq\frac{W^{2}}{\mu\left(B\right)}$,
in particular if $\frac{W^{2}}{\mu\left(B\right)}=o\left(1\right)$,
then $B,S$ is almost covering.
\end{prop}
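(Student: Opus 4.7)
The plan is to test the operator $T_{S,a}$ against the indicator function $f=\one_{B}$ and exploit the spectral gap $W$ on the mean-zero subspace. The key observation is that
\[
T_{S,a}\one_{B}(x)=\sum_{s\in S}a_{s}\one_{B}(xs^{-1})=\sum_{s\in S}a_{s}\one_{Bs}(x),
\]
so $T_{S,a}\one_{B}$ is supported on $U:=\bigcup_{s\in S}Bs$ and vanishes identically on the uncovered set $L\setminus U$.

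Next, I would orthogonally decompose $\one_{B}=\mu(B)\cdot\one+f_{0}$, where $f_{0}\in L_{0}^{2}(L)$ has norm $\|f_{0}\|_{2}^{2}=\mu(B)-\mu(B)^{2}\leq\mu(B)$. Since $T_{S,a}\one=\one$, applying the operator gives $T_{S,a}\one_{B}=\mu(B)+T_{S,a}f_{0}$. On the complement $L\setminus U$ the left-hand side is zero, so $T_{S,a}f_{0}(x)=-\mu(B)$ for every $x\in L\setminus U$, yielding the pointwise lower bound
\[
\|T_{S,a}f_{0}\|_{2}^{2}\geq\mu(B)^{2}\,\mu(L\setminus U).
\]
On the other hand, the definition of $W$ gives the upper bound $\|T_{S,a}f_{0}\|_{2}^{2}\leq W^{2}\|f_{0}\|_{2}^{2}\leq W^{2}\mu(B)$. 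Combining the two and dividing by $\mu(B)^{2}$ yields $\mu(L\setminus U)\leq W^{2}/\mu(B)$, which is the stated bound; the ``in particular'' clause is immediate.

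There is no real obstacle here beyond keeping the decomposition straight: the argument is the standard Chebyshev/$L^{2}$ estimate packaged for convolution operators on a compact group, and it uses essentially nothing about $L$ besides unimodularity of Haar measure and the fact that $T_{S,a}\one=\one$. One minor point worth checking is that the bound is uniform in the choice of weights $a_{s}$, which it is, since $W=W_{S,a}$ is defined with the same weights as $T_{S,a}$; the estimate is sharp (up to the $(1-\mu(B))$ factor I dropped) when $B$ is small and $T_{S,a}f_{0}$ concentrates on $L\setminus U$.
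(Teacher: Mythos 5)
Your proof is correct and is essentially identical to the paper's: you both write $\one_B - \mu(B)\one$ as the mean-zero component, apply the spectral bound $W$ to $T_{S,a}$ on $L_0^2$, and observe that on the uncovered set the convolved function equals $-\mu(B)$ pointwise, giving the Chebyshev-type inequality. The only cosmetic difference is that you phrase the mean-zero part as an orthogonal decomposition $\one_B = \mu(B)\one + f_0$, whereas the paper works directly with $I_B - \mu(B)$.
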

\begin{proof}
Let $I_{B}\left(x\right)$ be the indicator function of the set $B$.
$I_{B}\left(x\right)-\mu\left(B\right)$ is in $L_{0}^{2}\left(L\right)$
and $\int_{L}\left(I_{B}\left(x\right)-\mu\left(B\right)\right)^{2}d\mu\left(x\right)=\mu\left(B\right)\left(1-\mu\left(B\right)\right)$.
Hence $\int_{L}\left[T_{S,a}\left[I_{B}\left(x\right)-\mu\left(B\right)\right]\right]^{2}d\mu\left(x\right)\leq W^{2}\mu\left(B\right)\left(1-\mu\left(B\right)\right)$,
that is 
\begin{equation}
\int_{L}\left[\sum_{s\in S}a_{s}I_{B}\left(xs^{-1}\right)-\mu\left(B\right)\right]^{2}d\mu\left(x\right)\leq W^{2}\mu\left(B\right)\left(1-\mu\left(B\right)\right).\label{eq:variance-ineq}
\end{equation}
If $x\notin Bs$ for any $s\in S$ then $xs^{-1}\notin B$ and the
bracketed expression in (\ref{eq:variance-ineq}) is $\left[\mu\left(B\right)\right]^{2}$,
and hence $\mu\left(L\backslash\bigcup_{s\in S}Bs\right)\mu\left(B\right)^{2}\leq W^{2}\mu\left(B\right)$,
which gives the claimed inequality.
\end{proof}
If $L$ is a continuous group then we can choose $B$ to satisfy $\mu\left(B\right)=\frac{1}{2\left|S\right|}$,
so that $\mu\left(L\backslash\bigcup_{s\in S}Bs\right)\geq\frac{1}{2}$
and hence from Proposition \ref{prop:spec-norm-covering}, $W^{2}\geq\frac{1}{4\left|S\right|}$
or $W\geq\frac{1}{2\left|S\right|^{1/2}}$. In our applications $W$
will be of this size, namely $\nicefrac{1}{\sqrt{\left|S\right|}}$,
and in this case $\nicefrac{W^{2}}{\mu\left(B\right)}$ is $o\left(1\right)$
iff $\mu\left(B\right)\left|S\right|\rightarrow\infty$, matching
(\ref{eq:random-almost-cover}). Hence in such cases $B,S$ is essentially
optimal in its almost covering property.

To conclude that $B,S$ is actually covering we need to assume more
about the shape of $B$. Specifically we take $B$ to be a ball which
is the case of interest to us. Let $d\left(x,y\right)$ be a $L$-bi-invariant
metric on $L$ (if $L$ is finite then $d$ is the discrete metric
$d\left(x,y\right)=0$ if $x=y$, $d\left(x,y\right)=\infty$ if $x\neq y$),
and let $B$ be a ball centered at $e$ of volume $\mu\left(B\right)=v$.
Then $B=B^{-1}$ and $sB=Bs$ is the ball centered at $s$ and of
volume $v$. $B,S$ is covering iff every ball in $L$ of volume $v$
contains an $s\in S$.
\begin{cor}
\label{cor:ball-cover}If $B$ is a ball in $L$ with center $e$,
radius $r\geq0$ and volume $v>W$ then $B_{2r}\left(e\right)$, $S$
is covering (where $B_{r}\left(x\right)$ is the ball centered at
$x$ and of radius $r$). 
\end{cor}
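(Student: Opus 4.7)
The plan is to apply Proposition~\ref{prop:spec-norm-covering} to $B=B_r(e)$ and convert the resulting ``almost covering'' estimate into a genuine covering by balls of twice the radius, via bi-invariance of $d$ and the triangle inequality.

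First I would record the elementary consequence of bi-invariance that every right translate $Bs$ of $B=B_r(e)$ equals the metric ball $B_r(s)$, and that every ball of radius $r$ has volume $v$. Plugging $B$ into Proposition~\ref{prop:spec-norm-covering} then gives
$$\mu\Bigl(L\setminus\bigcup_{s\in S}B_r(s)\Bigr)\;\leq\;\frac{W^2}{v}.$$

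Next, I would argue by contradiction. Suppose some ball $B_{2r}(x)$ of radius $2r$ contains no element of $S$. Then $d(x,s)>2r$ for every $s\in S$, so the triangle inequality forces $B_r(x)\cap B_r(s)=\varnothing$ for each $s$. Hence $B_r(x)$ lies entirely inside the uncovered set on the left of the displayed inequality, and comparing volumes (with $\mu(B_r(x))=v$ again by bi-invariance) yields $v\leq W^{2}/v$, i.e.\ $v\leq W$, contradicting the hypothesis $v>W$. Therefore every ball of radius $2r$, equivalently every ball of volume $\mu(B_{2r}(e))$, meets $S$, which is precisely the statement that $B_{2r}(e),S$ covers $L$.

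I do not anticipate any genuine obstacle here: the whole argument is a short contrapositive combining the variance-type spectral bound of Proposition~\ref{prop:spec-norm-covering} with bi-invariance and the triangle inequality. The one mild subtlety worth spelling out is that bi-invariance is used twice, once to identify each right translate $Bs$ with the ball $B_r(s)$ and once to assert that every ball of a given radius has the same volume.
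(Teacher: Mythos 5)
Your proof is correct and follows essentially the same route as the paper: assume some $B_{2r}(x)$ misses $S$, use bi-invariance and the triangle inequality to show $B_r(x)$ lies in the uncovered set $L\setminus\bigcup_{s\in S}B_r(s)$, and then compare with the bound $W^2/v$ from Proposition~\ref{prop:spec-norm-covering} to get the contradiction $v\leq W$. The paper phrases the triangle-inequality step pointwise (for each $y\in B_r(x)$ one has $B_r(y)\cap S=\varnothing$) rather than as disjointness of $B_r(x)$ from each $B_r(s)$, but the two formulations are equivalent.
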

\begin{proof}
If there is $x\in L$ such that no $s\in S$ meets $B_{2r}\left(x\right)$,
then for each $y\in B_{r}\left(x\right)$, $B_{r}\left(y\right)\cap S=\varnothing$.
Hence each such $y$ lies in $L\backslash\left(\bigcup_{s\in S}Bs\right)$,
so that the measure of this set is at least $v$. On the other hand
by Proposition \ref{prop:spec-norm-covering} we have $v\leq\frac{W^{2}}{v}$
or $v\leq W$, which contradicts our assumption.
\end{proof}
\begin{rems*}
\begin{enumerate}
\item In the continuous group case when $W\approx\nicefrac{1}{\sqrt{\left|S\right|}}$
is as small as it can possibly be, Corollary \ref{cor:ball-cover}
yields that every ball of volume $\left|S\right|^{-1/2}$ contains
a point from $S$. While in some cases this covering volume bound
might be far from the truth for $S$, there are very natural cases
(see below) where this is sharp.
\item If $L$ is finite then the only balls are singletons and $L$ itself.
In this case if $B=\left\{ e\right\} $, $\mu\left(B\right)=\frac{1}{\left|L\right|}$
and the upper bound in Proposition \ref{prop:spec-norm-covering}
gives a lower bound for $\bigcup_{s\in S}Bs=S$ (almost covering asserts
that $\nicefrac{\left|S\right|}{\left|L\right|}\rightarrow1$). The
covering Proposition \ref{prop:spec-norm-covering} asserts that $S=L$
if $W<\frac{1}{\left|L\right|}$.
\item There are variants of Proposition \ref{prop:spec-norm-covering} for
symmetric spaces (see \cite{bourgain2012local}) and for Cayley graphs
(\cite{sardari2015diameter,lubetzky2016cutoff}). We formulated the
main variance inequality (\ref{eq:variance-ineq}) in terms of the
spectral norm $W$ alone. In some important cases one can exploit
a more explicit version. If $T_{S,a}$ is normal and hence diagonalizable
with an o.n.b.\ $\phi_{j}$, $j=1,2,\ldots$ of $L_{0}^{2}\left(L\right)$,
then the l.h.s.\ of (\ref{eq:variance-ineq}) is equal to
\begin{equation}
\sum_{j=1}^{\infty}\left|\left\langle I_{B},\phi_{j}\right\rangle \right|^{2}\left|\lambda_{j}\right|^{2}\left(T_{S,a}\right)\label{eq:variance-normal}
\end{equation}
where $\lambda_{j}$ is the eigenvalue corresponding to $\phi_{j}$.
(\ref{eq:variance-ineq}) then follows by invoking $\left|\lambda_{j}\right|\leq W$.
In the form (\ref{eq:variance-normal}) one can allow exceptions to
this as long as there aren't too many such $j$'s and one can also
remove very high frequency $j$'s (in the continuous setting) since
$\left|\left\langle I_{B},\phi_{j}\right\rangle \right|^{2}$ is very
small for these. Such an analysis is carried out in \cite{Sarnak2015Appendixto2015}
in a different setting.
\end{enumerate}
\end{rems*}

We first apply Proposition \ref{prop:spec-norm-covering} to the question
of the covering properties of the arguments of Gaussian primes $P$
in $\mathbb{Z}\left[\sqrt{-1}\right]$, as discussed in Section \ref{sec:Sums-of-squares}.
Consider the set $S_{X}$ of such primes $P$ with $N\left(P\right)\leq X$
(we consider $P,-P,iP,-iP$ as distinct), so that by the Prime Number
Theorem
\[
\left|S_{X}\right|\sim\frac{4X}{\log X}.
\]
Their arguments $\theta_{P}$ are elements in the circle $\left[0,2\pi\right)$
which is the group $L$ for which we apply Proposition \ref{prop:spec-norm-covering}
and its Corollary. For each $P$ we give a weight $a_{P}=\log N\left(P\right)\left(1-\frac{N\left(P\right)}{X}\right)$.
Hence
\begin{align*}
\nu & =\frac{1}{\Psi_{1}\left(X\right)}\sum_{N\left(P\right)\leq X}a_{P}\delta_{\theta\left(P\right)}\\
\Psi_{1}\left(X\right) & =\sum_{N\left(P\right)\leq X}\log N\left(P\right)\left(1-\frac{N\left(P\right)}{X}\right)\sim c_{1}X,\ c_{1}>0.
\end{align*}
The point about these weights is that assuming the generalized Riemann
Hypothesis for the Hecke L-functions, $L\left(s,\lambda^{m}\right)$
where $\lambda^{m}\left(\alpha\right)=\left(\frac{\alpha}{\left|\alpha\right|}\right)^{4m}$
we have (see \cite{Sarnak1985numberpointscertain}) that 
\[
\sum_{N\left(P\right)\leq X}\lambda^{m}\left(\theta_{P}\right)a_{P}\ll\left(\log\left|m\right|+1\right)X^{1/2}.
\]
Hence for the o.n.b.\ $\lambda^{m}$, $m\in\mathbb{Z}$ of $L^{2}\left[0,2\pi\right)$
\[
\nu_{S,a}*\lambda^{m}=\widehat{\nu_{S,a}}\left(m\right)\cdot\lambda^{m}
\]
where
\[
\left|\widehat{\nu_{S,a}}\left(m\right)\right|\ll X^{-1/2}\left(\log\left|m\right|+1\right).
\]
This is an example where the high frequencies can be estimated easily,
and we find that (\ref{eq:variance-normal}) is
\[
\ll X^{-1}\sum_{m\neq0}\left(\log\left|m\right|+1\right)^{2}\left|\left\langle I_{B},\lambda^{m}\right\rangle \right|^{2}\ll X^{-1}\left(\log X\right)^{2}\mu\left(B\right).
\]
Hence for $\frac{\left(\log X\right)^{2}}{X\mu\left(B\right)}=o\left(1\right)$
or $\frac{\mu\left(B\right)\left|S_{X}\right|}{\log X}\rightarrow\infty$;
$B,S_{X}$ is almost covering. That is almost all intervals of length
$h$ with $\frac{h\left|S_{X}\right|}{\log X}\rightarrow\infty$,
have a $\theta_{P}$ with $P\in S_{X}$. So except for the extra $\log X$
factor this is sharp. This has direct bearing to the randomized reduction
in Section \ref{sec:Sums-of-squares}. Namely to find the $P_{j}$'s
there in the requisite sector, we can choose a random angle $\theta$
therein and then according to the above there will be a prime in a
much smaller sector of area $\left(\log X\right)^{2}$. We can check
efficiently each of the few integer lattice points in this sector
to see if they are prime. Of course this is still a randomized reduction.

Since the almost covering length of the $\theta_{P}$'s is optimally
small (under GRH) a natural question is how small is the covering
length? Proposition \ref{prop:spec-norm-covering} coupled with GRH
which gives (essentially) that $W\leq\frac{1}{\left|S_{X}\right|^{1/2}}\left(\log X\right)^{1/2}$,
shows that the covering length is at most $\sqrt{\log X/\left|S_{X}\right|}$.
It turns out that this upper bound is very close to optimal. Let $\alpha=x_{1}+iy_{1}$,
$\beta=x_{2}+iy_{2}$ be two distinct primitive elements of $\mathbb{Z}\left[\sqrt{-1}\right]$
with arguments $\theta_{\alpha},\theta_{\beta}$ in $\left[0,\nicefrac{\pi}{4}\right)$.
Then 
\[
\left|\tan\theta_{\alpha}-\tan\theta_{\beta}\right|=\left|\frac{y_{1}}{x_{1}}-\frac{y_{2}}{x_{2}}\right|\geq\frac{1}{x_{1}x_{2}}\geq\frac{1}{\left|\alpha\right|\left|\beta\right|}.
\]
Hence if $\beta$ is fixed of small norm and $N\left(\alpha\right)\leq X$,
then 
\[
\left|\tan\theta_{\alpha}-\tan\theta_{\beta}\right|\gg\frac{1}{\sqrt{X}}
\]
so there is an interval of length $\nicefrac{1}{\sqrt{X}}$ free of
$\theta_{\alpha}$'s for $\alpha$ primitive and hence certainly free
of $\theta_{P}$'s with $P$ a prime. Thus our upper bounds of $\frac{g\left(X\right)\log X}{\left|S_{X}\right|}$
with $g\rightarrow\infty$, for the almost covering length, and of
$\sqrt{\log X/\left|S_{X}\right|}$ for the covering length, are sharp
up to the $\log$ factors.

\subsection{\label{subsec:Quaternions-and-Ramanujan}Quaternions and Ramanujan}

Let $H\left(\mathbb{R}\right)$ denote the Hamilton quaternions $\alpha=x_{0}+x_{1}\underline{i}+x_{2}\underline{j}+x_{3}\underline{k}$,
$x_{j}\in\mathbb{R}$. The projection $\alpha\mapsto\widetilde{\alpha}=\nicefrac{\alpha}{\left|\alpha\right|}$
is a morphism of $H^{\times}\left(\mathbb{R}\right)$ onto $H^{1}\left(\mathbb{R}\right)$,
the quaternions of norm $1$. Moreover $s$ given by
\begin{equation}
\alpha\mapsto s\left(\alpha\right)=\left[\begin{matrix}x_{0}+ix_{1} & x_{2}+ix_{3}\\
-x_{2}+ix_{3} & x_{0}-ix_{1}
\end{matrix}\right]\label{eq:H1-SU-split}
\end{equation}
is an isomorphism of $H^{1}\left(\mathbb{R}\right)$ with $SU\left(2\right)$,
and both with their bi-invariant metrics are isometric to $S^{3}$.
In this way the solutions to (\ref{eq:4-squares}) which we denote
by $S\left(n\right)$ give points in $G=SU\left(2\right)\simeq S^{3}$
and we can apply the results in Section \ref{subsec:Spectral-gap-and-SA}
to study their covering properties w.r.t.\ balls. We assume that
$n$ is odd. Jacobi showed that (see \cite{davidoff2003elementary})
\[
\left|S\left(n\right)\right|=8\sum_{d\mid n}d.
\]
For the convolution operator on $L^{2}\left(SU\left(2\right)\right)$we
take $a_{s}=\frac{1}{\left|S\left(n\right)\right|}$, $s\in S\left(n\right)$
so that
\[
Tf\left(x\right)=\frac{1}{\left|S\left(n\right)\right|}\sum_{\alpha\in S\left(n\right)}f\left(xs^{-1}\left(\alpha\right)\right).
\]
The Ramanujan Conjectures (Deligne's Theorem \cite{deligne1974conjecture})
imply that (see \cite{lubotzky1987hecke} and also the discussion
below)
\begin{equation}
W=W_{S\left(n\right)}\leq\frac{n^{1/2}\sum_{d\mid n}1}{\left|S\left(n\right)\right|}.\label{eq:weyl-Ramanujan-bound}
\end{equation}
Hence 
\[
\frac{W^{2}}{\left|S\right|\mu\left(B\right)}\leq\frac{\sum_{d\mid n}1}{8\sum_{d\mid n}\frac{1}{d}}\cdot\frac{1}{\left|S\right|\mu\left(B\right)},
\]
and if 
\[
\frac{\left|S\left(n\right)\right|\mu\left(B\right)}{\sum_{d\mid n}1}\rightarrow\infty,
\]
then $S\left(n\right),B$ is almost covering $SU\left(2\right)$.
Two special cases of interest are: $n$ a prime; in which $\left|S\left(n\right)\right|\mu\left(B\right)\rightarrow\infty$
suffices and matches (\ref{eq:random-almost-cover}), while for the
case $n=p^{k}$ for $p>2$ a fixed prime, $\frac{\mu\left(B\right)\left|S\left(n\right)\right|}{\log_{p}\left|S\left(n\right)\right|}\rightarrow\infty$
suffices to almost cover. The last is the case of interest for the
covering properties of Golden Gates and up to the $\log$ factor it
is optimal.

As far as the covering properties of $SU\left(2\right)$ by $S\left(n\right)$,
we apply (\ref{eq:weyl-Ramanujan-bound}) together with Corollary
\ref{cor:ball-cover} to get that
\begin{equation}
\mu\left(B\right)>\left(\frac{\sum_{d\mid n}1}{\sum_{d\mid n}d^{-1/2}}\right)\left|S\left(n\right)\right|^{-1/2}\label{eq:covering-SU2}
\end{equation}
suffices for $S\left(n\right)$ to cover $SU\left(2\right)$ with
balls of volume $\mu\left(B\right)$.

A very interesting question is to determine the covering volume for
$S\left(n\right)$, or at least its exponent. An elementary argument
using repulsion of the projections of $H\left(\mathbb{Z}\right)$
points onto $S^{3}$ (see \cite{sarnak2015letter}) shows that there
are balls $B$ of volume $\left|S\left(n\right)\right|^{-3/4}$ which
are free of any points from $S\left(n\right)$. Hence the exponent
for the covering volume lies in the interval $\left[-\frac{3}{4},-\frac{1}{2}\right]$
and it is probably equal to $-\nicefrac{3}{4}$. One can establish
(\ref{eq:covering-SU2}) with exponent $\left|S\left(n\right)\right|^{-\nicefrac{1}{2}+\varepsilon}$
using the 'Kloosterman circle method', see \cite{Sardari2015Optimalstrongapproximation}.
In this approach Kloosterman sums and estimates associated with them
play a central role and assuming a natural conjecture about cancellations
of sums involving these, one can establish that the volume covering
exponent is $-\nicefrac{3}{4}$ \cite{browning2016twisted}. That
there are such points which are badly approximable by $S\left(n\right)$
leads to a rich metric diophantine approximation theory in this and
much more general contexts, see \cite{Ghosh2014MetricDiophantineapproximation}.

We turn to these questions for integers in a number field. Let $K$
be a totally real number field of degree $k$ and denote by $\sigma_{1},\ldots,\sigma_{k}$
its Galois embeddings into $\mathbb{R}$ with with $\sigma_{1}=\mathrm{identity}$.
For a definite quadratic form $F\left(x_{1},x_{2},x_{3},x_{4}\right)$
such as the sum of four squares we can use Hilbert modular form theory
as above to study the distribution of the $S\left(m\right)$ solutions
to 
\begin{equation}
F\left(x_{1},x_{2},x_{3},x_{4}\right)=x_{1}^{2}+x_{2}^{2}+x_{3}^{2}+x_{4}^{2}=m\quad\text{with}\quad x_{j}\in\mathcal{O}_{K}\quad\text{and}\quad m\in\mathcal{O}_{K}.\label{eq:quadratic-quartic-OK}
\end{equation}
Let $U$ denote the group of units of $\mathcal{O}_{K}$. Now $x=\left(x_{1},x_{2},x_{3},x_{4}\right)\in S\left(m\right)$
iff $\varepsilon x\in S\left(\varepsilon^{2}m\right)$ for any $\varepsilon\in U$.
Thus any properties of $S\left(m\right)$ that are of interest to
us are the same for $S\left(m\right)$ and $S\left(\varepsilon^{2}m\right)$.

Except for very special $K$'s (see \cite{kirschmer2010algorithmic})
the class number of the form $F$ above is not one so that the exact
number of solutions to (\ref{eq:quadratic-quartic-OK}) is not a simple
divisor function of $\left(m\right)$. However the asymptotic behavior
of $\left|S\left(m\right)\right|$ for $\left(m\right)$ (the principal
ideal generated by $m$) which does not have prime factors in the
finitely many ramified primes of $F$ is given by a divisor sum (\cite{schulze2004representation}):
\[
\left|S\left(m\right)\right|\sim C\sum_{a\mid\left(m\right)}N\left(a\right),
\]
here $C$ is a positive constant related to the finite group $\mathrm{Aut}_{\mathcal{O}}\left(F\right)$
and $N\left(a\right)$ is the norm of the ideal $a$.

To each $x\in S\left(m\right)$ and $j=1,\ldots,k$, $\nicefrac{\sigma_{j}\left(x\right)}{\sqrt{\sigma_{j}\left(m\right)}}$
(note $\sigma_{j}\left(m\right)>0$ if $S\left(m\right)\neq\varnothing$)
lies in $S^{3}$, which we have identified with $SU\left(2\right)$.
Hence via the diagonal embedding each $x\in S\left(m\right)$ gives
a point in $L\cong\left(S^{3}\right)^{k}$. The analysis at the beginning
of this Section and the Ramanujan Conjectures which are valid here
as well (\cite{harris2001geometry}) show that 
\[
W_{S\left(m\right)}\leq\frac{N\left(\left(m\right)\right)^{1/2}\sum_{a\mid\left(m\right)}1}{\left|S\left(m\right)\right|}.
\]
Applying Proposition \ref{prop:spec-norm-covering} to $L$ we deduce
that if
\begin{equation}
\frac{\mu\left(B\right)\left|S\left(m\right)\right|}{\sum_{a\mid\left(m\right)}1}\rightarrow\infty\label{eq:Ramanujan-covering}
\end{equation}
then $B,S\left(m\right)$ almost covers $L$.

As before this is essentially an optimal covering. Two two extreme
cases of interest are if $B$ is a ball about $e\in L$ of radius
$r$ so that $\mu\left(B\right)\sim cr^{3k}$, then we require $N\left(m\right)$
to be a little bigger than $r^{-3k}$ in order for almost all balls
of this radius to contain a point from $S\left(m\right)$. The other
case which is of most interest to us is to fixate on the first factor
$\sigma_{1}$. Take $B$ to be $B_{1}\times S^{3}\times\cdots\times S^{3}$
with $B_{1}$ a ball of radius $r$ about $e_{1}$ in $SU\left(2\right)$.
So $\mu\left(B\right)=\mu_{1}\left(B_{1}\right)$ and as long as $\mu\left(B_{1}\right)N\left(m\right)$
goes to infinity as required then $\sigma_{1}\left(x\right),$ $x\in S\left(m\right)$
almost covers $SU\left(2\right)$. So in this case we need $N\left(m\right)$
to be a little larger than $r_{1}^{-3}$.

The above, when applied to special cases, suffices for the analysis
of the optimal covering properties of Golden Gate circuits. For the
purpose of navigation and exact circuit length we need to exploit
special quadratic forms of class number one and which arise as norm
forms of quaternion algebras. We give a different treatment of (\ref{eq:Ramanujan-covering})
from the point of view of the local and global arithmetic of special
quaternion algebras over $K$. This also brings out the important
special finite subgroups $C$ of $PU\left(2\right)$ which are used
to define our Super-Golden Gates.

Let $K$ be as above and $K_{j}$ the real completion of $K$ at $\sigma_{j}$.
For $P$ a prime (ideal) in $\mathcal{O}_{K}$ we let $K_{P}$ be
the completion of $K$ at $P$. Let $D$ be a totally definite quaternion
algebra over $K$, that is $D\otimes K_{j}\cong H\left(\mathbb{R}\right)$
for all $j$, where $H$ is the standard Hamilton quaternion algebra.
The primes at which $D$ is unramified are those for which $D\otimes K_{P}\cong Mat_{2}\left(K_{P}\right)$.
The ramified primes for $D$ are even in number (if one include the
$K_{j}$'s in the count). Choose an order $\mathcal{O}$, which we
usually take to be maximal, in $D$ (for the basic properties and
definitions see \cite{Vigneras1980Arithmetiquedesalgebres}). Let
$D\left(\mathbb{A}\right)$ denote the corresponding adele ring of
$D$ w.r.t.\ the order $\mathcal{O}$. For $P$ unramified and $S=\left\{ P\right\} $
we are interested in the $S$-arithmetic group
\[
\Gamma_{S}=\left\{ \gamma\in D\left(K\right)\,\middle|\,\gamma\text{ is }\mathcal{O}\text{-integral outside }S\right\} .
\]
Now $\nicefrac{D^{\times}\left(K_{P}\right)}{center}\cong PGL_{2}\left(K_{P}\right)$,
and via this identification, $\Gamma_{S}$ projects onto a lattice
$\Gamma$ in $PGL_{2}\left(K_{P}\right)$ (see \cite{Vigneras1980Arithmetiquedesalgebres}).
Also $\nicefrac{H^{\times}\left(\mathbb{R}\right)}{center}\cong PU\left(2\right)$,
so using the embeddings $\sigma_{j}$, $j=1,2,\ldots,k$ we get a
diagonal embedding of $\Gamma=\nicefrac{\Gamma_{S}}{center}$ into
$L\times PGL_{2}\left(K_{P}\right)$, where $L=\left(PU\left(2\right)\right)^{k}$.

The quotient space $X_{P}=\nicefrac{PGL_{2}\left(K_{P}\right)}{PGL_{2}\left(\mathcal{O}_{P}\right)}$
is a $N\left(p\right)+1$ regular tree (\cite{serre1980trees}) on
which $\Gamma$ acts isometrically. Let $L^{2}\left(\Gamma\backslash\left(L\times X_{P}\right)\right)$
be the space of $L^{2}$ (w.r.t.\ $dg\times dg_{P}$) $\Gamma$-periodic
functions:
\[
f\left(\gamma g,\gamma x\right)=f\left(g,x\right)\text{ for }\gamma\in\Gamma,g\in L,x\in X_{P}.
\]
The Hecke operators $T_{t}$, $t>1$ acting on the second variable
are defined by 
\[
T_{t}f\left(g,x\right)=\sum_{d\left(y,x\right)=t}f\left(g,y\right).
\]
$T_{k}$ preserves $L^{2}\left(\Gamma\backslash\left(L\times X_{P}\right)\right)$
and also $L_{0}^{2}\left(\Gamma\backslash\left(L\times X_{P}\right)\right)$;
the orthocomplement of the constant function. It is self adjoint and
its eigenvalues $\lambda$ on $L_{0}^{2}\left(\Gamma\backslash\left(L\times X_{P}\right)\right)$
satisfy 
\begin{equation}
\left|\lambda\right|\leq\left(t+1\right)N\left(P\right)^{t/2}.\label{eq:lambda-Ramanujan-bound}
\end{equation}
One way to derive (\ref{eq:lambda-Ramanujan-bound}) is to use automorphic
representations. The eigenfunctions in $L_{0}^{2}\left(\Gamma\backslash\left(L\times X_{P}\right)\right)$
can, after a further diagonalization of Hecke operators at other primes,
be embedded as a vector in an (non one-dimensional) irreducible representation
$\pi$ of $D\left(\mathbb{A}\right)$ occuring in its right regular
representation on $L^{2}\left(D\left(k\right)\backslash D\left(\mathbb{A}\right)\right)$.
The Ramanujan Conjectures, which are theorems in this context - thanks
to the Jacquet-Langlands correspondence and Deligne's theorem (see
\cite{jacquet1972automorphic,deligne1974conjecture}), assert that
such a $\pi\cong\bigotimes_{v}\pi_{v}$ with $v$ ranging over all
places $v$ of $K$, has each $\pi_{v}$ tempered (\cite{satake1966spherical}).
In particular, this applies to the spherical representation $\pi_{P}$
of $PGL_{2}\left(F_{P}\right)$ and this in turn implies that the
eigenvalue $\lambda$ satisfies (\ref{eq:lambda-Ramanujan-bound}).

To connect this with circuit lengths of elements $\gamma\in\Gamma$
we impose a strong condition, namely that the action of $\Gamma$
on $X_{P}$ is transitive. For this to happen for $P$ outside a finite
set of primes, the class number of the order $\mathcal{O}$ needs
to be $1$. In this definite quaternion algebra setting this happens
only for finitely many $D$'s and $K$'s which have been enumerated
in \cite{kirschmer2010algorithmic}.

In this transitive case $L^{2}\left(\Gamma\backslash\left(G\times X_{P}\right)\right)$
may be identified with functions $h$ on $G$ which are $U_{\Gamma}$
invariant where $U_{\Gamma}=\left\{ \gamma\in\Gamma\,\middle|\,\gamma e=e\right\} $,
$e$ the identity coset $PGL_{2}\left(\mathcal{O}_{P}\right)$ in
$X_{P}$. More explicitly for $f\in L^{2}\left(\Gamma\backslash\left(G\times X_{P}\right)\right)$
\begin{align*}
h\left(g\right) & =g\left(g,e\right),\\
\text{and }h\left(\delta g\right) & =h\left(g\right)\text{ for }\delta\in U_{\Gamma}.
\end{align*}
Such an $h$ has a unique $\Gamma$ extension to $G\times X_{P}$.
The action of $T_{k}$ on $L^{2}\left(U_{\Gamma}\backslash G\right)$
becomes 
\[
T_{k}h\left(g\right)=\sum_{{\gamma\in\Gamma/U_{\Gamma}\atop d\left(\gamma e,e\right)=k}}h\left(g\gamma\right)
\]
and according to (\ref{eq:lambda-Ramanujan-bound}) $T_{k}$ is the
kind of operator considered in (\ref{eq:Hecke-operator}), with a
sharp spectral bound.

Finally in this $X_{P}$ transitive setting we relate the distance
moved by $\gamma$ on the tree to circuit (word) length for generators
of $\Gamma$. Let $\delta_{1},\ldots,\delta_{r}\in\Gamma$ with $r=N\left(P\right)+1$,
be elements that take $v_{0}$ to its $r$ immediate neighbors $S_{1}=\left\{ \xi\,\middle|\,d\left(\xi,v_{0}\right)=1\right\} $
in $X_{P}$. If $y\in X_{P}$ and $y=\beta v_{0}$ with $\beta\in\Gamma$
then $\beta\delta_{j}v_{0}$ for $j=1,\ldots,r$ are the $r$ immediate
neighbors of $y$. Hence if $\mu\in\Gamma$ and $d\left(\mu v_{0},v_{0}\right)=d\left(\mu^{-1}v_{0},v_{0}\right)=k\geq1$,
then there is a unique $j_{1}$ such that $d\left(\mu^{-1}\delta_{j_{1}}v_{0},v_{0}\right)=k-1$.
If $k-1\geq1$ we repeat this to find $\delta_{j_{2}}$ such that
$d\left(\mu^{-1}\delta_{j_{1}}\delta_{j_{2}}v_{0},v_{0}\right)=k-2$.
Repeat this until we arrive at $d\left(\mu^{-1}\delta_{j_{1}}\ldots\delta_{j_{k}}v_{0},v_{0}\right)=0$,
and hence 
\begin{equation}
\mu^{-1}\delta_{j_{1}}\ldots\delta_{j_{k}}=u\in U,\quad\text{or}\quad\mu=\delta_{j_{1}}\ldots\delta_{j_{k}}u^{-1}.\label{eq:path-in-Gamma}
\end{equation}
Since at every step the choices are determined we have that any $\mu\in\Gamma$
with $d\left(\mu v_{0},v_{0}\right)=k$ has a unique expression in
the form (\ref{eq:path-in-Gamma}). 

Thus the distance moved by $\gamma$ on the tree corresponds to the
word length in generators $\delta_{1},\ldots,\delta_{r}$ and the
``Hecke orbit'' corresponds to a circuit of a given length. Moreover
if we are given $\gamma\in\Gamma$ and want to express $\gamma$ in
the form (\ref{eq:path-in-Gamma}) then this can be done efficiently
by navigating the tree, as described above. This is an important feature
that will be exploited in the navigation algorithms below.

In the case that $U_{\Gamma}=\left\{ 1\right\} $, that is when $\Gamma$
acts simply transitively on $X_{P}$, the set $S=\left\{ \delta_{1},\ldots,\delta_{r}\right\} $
is invariant under $s\mapsto s^{-1}$. Hence $S=\left\{ \delta_{1},\ldots,\delta_{t},\mu_{1},\mu_{1}^{-1},\ldots,\mu_{v},\mu_{v}^{-1}\right\} $
with $\delta_{j}=\delta_{j}^{-1}$, $\mu_{j}\neq\mu_{j}^{-1}$ and
$t+2v=r$. (\ref{eq:path-in-Gamma}) then shows that $X_{P}$ is the
Cayley graph of $\Gamma$ with respect to the generators $S$, the
$\mu_{j}$'s being of infinite order and $\Gamma\cong\left\langle \mu_{1}\right\rangle *\ldots*\left\langle \mu_{v}\right\rangle *\left\langle \delta_{1}\right\rangle *\ldots*\left\langle \delta_{t}\right\rangle $.
The distance on the tree $X_{P}$ corresponding to the word (circuit)
length w.r.t.\ the symmetric set of generators $S$. This is the
case of Golden-Gate sets which as we have noted can arise from only
a finite number of $D$'s and $K$'s; see Section \ref{sec:Super-Golden-Gates}
for some examples (by varying $P$ there are infinitely many such
Golden Gate sets).

For our super Golden Gate sets we require further that $\Gamma$ acts
transitively on the edges of $X_{P}$. In fact we assume that $U_{\Gamma}$
acts simply transitively on $S_{1}$ and that among the $\delta_{j}$'s
there is an involution $T$. In this case the set $\delta_{1},\ldots,\delta_{r}$
can be taken as $u^{-1}Tu$, $u\in U_{\Gamma}$ (note $\left|U_{\Gamma}\right|=r$).
(\ref{eq:path-in-Gamma}) now asserts that $\Gamma\cong U_{\Gamma}*\left\langle T\right\rangle $
and $d\left(\gamma v_{0},v_{0}\right)$ is exactly the $T$-count
in the representation of $\gamma\in\Gamma$ as a product of $T$ and
$u$'s for $u\in U$. This gives rise to our super-golden gate sets
$C$ plus $T$, with $C=U_{\Gamma}$. There are only finitely many
of these since the set of $D$'s and $K$'s lie in a finite list as
does the set of orders $\mathcal{O}$. Hence $U_{\Gamma}$ which is
the unit group of the order is limited, and this limits the primes
$P$ for which this group can act transitively on $S_{1}$. We conclude
that there are only finitely many possibilities for super Golden Gate
sets. There are a number of very interesting examples where all of
this happens, and these are described in the next section.

We end this Section by describing the relation between word length
of elements of $\Gamma$ in terms of the generators $\mu_{1},\ldots,\mu_{v},\delta_{1},\ldots,\delta_{t}$
and strong approximation for (\ref{eq:4-squares}). We explain this
for the case that $D=H=\left\langle 1,\underline{i},\underline{j},\underline{k}\right\rangle $
the Hamilton quaternions over $\mathbb{Q}$, this being the case that
was highlighted in \cite{lubotzky1986hecke} and is the basic example
of what we are calling Golden-Gates. Let $H\left(\mathbb{Z}\right)$
be the order $\mathbb{Z}+\mathbb{Z}\underline{i}+\mathbb{Z}\underline{j}+\mathbb{Z}\underline{k}$
in $H$. The group $U$ of invertible elements in $H\left(\mathbb{Z}\right)$
is $\left\{ \pm1,\pm\underline{i},\pm\underline{j},\pm\underline{k}\right\} $.
The spheres $S\left(m\right)=\left\{ \alpha\in H\left(\mathbb{Z}\right):\nr\left(\alpha\right):=\alpha\overline{\alpha}=m\right\} $
correspond to the solutions to (\ref{eq:4-squares}). These spheres
are acted on by $U\times U$ by left and right multiplication
\begin{equation}
\alpha\mapsto u_{1}\alpha u_{2}.\label{eq:UxU-action}
\end{equation}
For $p$ and odd prime $\left|S\left(p\right)\right|=8\left(p+1\right)$
and after multiplying on the right by $u\in U$ there are $\left(p+1\right)$
solutions. If $p\equiv1\,\left(4\right)$ exactly one coordinate of
an $\alpha\in S\left(p\right)$ is odd and we use $U$ to make it
the first coordinate and also ensure that it is positive. This gives
us $\mu_{1},\overline{\mu_{1}},\mu_{2},\overline{\mu_{2}},\ldots,\mu_{v},\overline{\mu_{v}}$
with $2v=p+1$ as the solutions (see \cite{lubotzky1987hecke,davidoff2003elementary}).
If $p\equiv3\,\left(4\right)$ then exactly one of the coordinates
of an $\alpha\in S\left(p\right)$ is even and we can use $U$ to
arrange that this coordinate is the first one, and again we can act
by $\pm1$ to choose the sign. In this way the $\left(p+1\right)$
solutions obtained become $\mu_{1},\overline{\mu_{1}},\ldots,\mu_{v},\overline{\mu_{v}},\delta_{1},\ldots,\delta_{t}$
with $p+1=2v+t$ and $\delta_{j}^{2}=-p$ for $j=1,\ldots,t$ (see
\cite{davidoff2003elementary}).

Now $H$ is unramified at each odd prime $p$ so that the group $\Gamma$
generated by these $p+1$ elements can be realized as a subgroup of
$PGL_{2}\left(\mathbb{Q}_{p}\right)$. These generators take the identity
coset $v_{0}=PGL_{2}\left(\mathbb{Z}_{p}\right)\in X_{p}=\nicefrac{PGL_{2}\left(\mathbb{Q}_{p}\right)}{PGL_{2}\left(\mathbb{Z}_{p}\right)}$
to its $\left(p+1\right)$ neighbors, and $\Gamma$ acts simply transitively
on $X_{p}$. Moreover $\Gamma\cong\left\langle \mu_{1}\right\rangle *\ldots\left\langle \mu_{v}\right\rangle *\left\langle \delta_{1}\right\rangle *\ldots*\left\langle \delta_{t}\right\rangle $
with $\left\langle \mu_{j}\right\rangle \cong\mathbb{Z}$ and $\left\langle \delta_{j}\right\rangle \cong\nicefrac{\mathbb{Z}}{2\mathbb{Z}}$,
and $X_{p}$ is the Cayley graph of $\Gamma$ w.r.t.\ $\mu_{1},\overline{\mu_{1}},\ldots,\mu_{v},\overline{\mu_{v}},\delta_{1},\ldots,\delta_{t}$.
Now a reduced word of length $\ell$ in these generators yields a
unique (up to right multiplication by $U$) $\alpha$ in $S\left(p^{\ell}\right)$,
which is primitive, namely, $\gcd\left(x_{1},\ldots,x_{4}\right)=1$.
Furthermore, every primitive $\alpha$ in $S\left(p^{\ell}\right)$
is achieved this way. This identifies the distance $d_{X_{p}}\left(v_{0},\alpha v_{0}\right)$
with the word length of $\alpha$ in our generators. The splitting
of $\nicefrac{H\left(\mathbb{R}\right)}{\mathbb{R}_{+}}$ by the isomorphism
(\ref{eq:H1-SU-split}) to $G=PU\left(2\right)$, yields the subgroup,
also denoted by $\Gamma$, generated by our $p+1$ elements. The question
of how well the words of length $\ell$, $\ell\leq L$ cover $G$
is equivalent to the same question for the $\alpha\in S\left(p^{\ell}\right)$
covering $S^{3}$, $\ell\leq L$. That is the problem discussed at
the beginning of this Section. It follows from these that the circuits
of length $\ell\leq L$ almost cover $G$ optimally. Precisely, the
number of circuits of length $\ell\leq L$ in $G$ is $\approx p^{L}$
(here $p$ is \emph{fixed} and $L\rightarrow\infty$) and as long
as 
\[
\frac{V\,p^{L}}{L}\rightarrow\infty,
\]
then almost all balls $B_{V}\left(\xi\right)$ in $G$ are covered.

Finally as far as navigating $G$ with these generators, again the
problem reduces to one of strong approximation, namely Task \ref{task:ball}
of Section \ref{sec:Sums-of-squares}, with $n=p^{\ell}$. The task
needs to be resolved in $\mathrm{poly}\left(L\right)$ steps. So to
determine if there is a circuit of length $\ell\leq L$ in $B_{V}\left(\xi\right)$,
we can do so for each $\ell\leq L$ separately. The question for a
given such $\ell$ decouples into two steps, firstly finding an $\alpha\in S\left(p^{\ell}\right)$
which is in $B_{V}\left(\xi\right)$. According to Theorem \ref{thm:ross-selinger}
this can be done efficiently at least if $\xi$ is special in $S^{3}$,
which amounts to $\xi\in G$ being diagonal. Once we have found $\alpha$
(or determined that none exists) we can express $\alpha$ as a circuit
of length $\ell$ in the generators by viewing $\alpha\in\nicefrac{PGL_{2}\left(\mathbb{Q}_{p}\right)}{PGL_{2}\left(\mathbb{Z}_{p}\right)}$.
There is a unique generator which will move $\alpha$ one step closer
to $v_{0}$. Repeating this $\ell$ times gives an efficient factorization
of $\alpha$. 

To summarize, given $\xi\in G$ diagonal and $B_{V}\left(\xi\right)$,
then assuming that we can factor efficiently and the heuristic in
the discussion in Theorem \ref{thm:ross-selinger}, we can find efficiently
a circuit of least word length in our generators which lies in $B_{V}\left(\xi\right)$.

Without an efficient factorization algorithm we settle for finding
special solutions to the strong approximation problem as described
in remark \ref{rem:without-factorization} and otherwise proceed as
above. This secures a circuit which lies in whose word length is;
\begin{equation}
\left(1+o\left(1\right)\right)\text{ times longer than the optimally short circuit}.\label{eq:one-plus-o-one}
\end{equation}

The above allows us to navigate to diagonal $\xi$'s in $G$ optimally
and efficiently. As far as an optimal navigation to a general $\xi$
via strong approximation, the problem at least in the form of Task
\ref{task:main-task} is apparently hard. We can navigate efficiently
to a $\xi=\left(\xi_{1},\xi_{2},\xi_{3},\xi_{4}\right)\in S^{3}$
which has at least two coordinate equal to $0$. In particular to
elements $x\in G$ of the form $\mu_{1}\xi\mu_{2}$ with $\mu_{1},\mu_{2}\in C_{4}$
th Pauli group and $\xi$ diagonal. Now any element of $G$ can be
expressed as a product of three such $\mu_{1}\xi\mu_{2}$'s and efficiently,
hence we can navigate efficiently to $x$ with a circuit whose lengthy
is $3$-times longer then the shortest circuit is for the generic
$x$. The efficient navigation with Super Golden Gates and using $T$-counts
is carried out similarly. This completes the theoretical diophantine
and algorithmic analysis concerning Golden and Super-Golden gates
that was announced in the Introduction. We end this Section with two
explicit examples.
\begin{description}
\item [{$\left(A\right)$\ V-gates:}] We take $p=5$. Then $p+1=6$ and
the generators are $1\pm2\underline{i},1\pm2\underline{j},1\pm2\underline{k}$;
which yield $S_{1}=\left(\begin{smallmatrix}1+2i & 0\\
0 & 1-2i
\end{smallmatrix}\right)$, $S_{2}=\left(\begin{smallmatrix}1 & 2\\
-2 & 1
\end{smallmatrix}\right)$, $S_{3}=\left(\begin{smallmatrix}1 & 2i\\
2i & 1
\end{smallmatrix}\right)$ ($\Gamma=\left\langle S_{1},S_{2},S_{3}\right\rangle \cong\mathbb{Z}*\mathbb{Z}*\mathbb{Z}$)
as a Golden Gate set $S_{1}^{\pm1},S_{2}^{\pm1},S_{3}^{\pm1}$ called
$V$-gates.
\item [{$\left(B\right)$}] $p=3$, $v=0$, $t=4$; $\delta_{1}=\underline{i}+\underline{j}+\underline{k}$,
$\delta_{2}=\underline{i}-\underline{j}+\underline{k}$, $\delta_{3}=\underline{i}+\underline{j}-\underline{k}$,
$\delta_{4}=\underline{i}-\underline{j}-\underline{k}$. These yield
the four involutions $S_{j}$ in $G$ given by $S_{1}=\left(\begin{smallmatrix}i & 1+i\\
-1+i & -i
\end{smallmatrix}\right)$, $S_{2}=\left(\begin{smallmatrix}i & -1+i\\
1+i & -i
\end{smallmatrix}\right)$, $S_{3}=\left(\begin{smallmatrix}i & 1-i\\
-1-i & -i
\end{smallmatrix}\right)$, $S_{4}=\left(\begin{smallmatrix}i & -1-i\\
1-i & -i
\end{smallmatrix}\right)$ ($\Gamma\cong\left\langle S_{1}\right\rangle *\left\langle S_{2}\right\rangle *\left\langle S_{3}\right\rangle *\left\langle S_{4}\right\rangle $)
and are a Golden Gate set.
\end{description}
The action of $U\times U$ on $S\left(3\right)$ given by (\ref{eq:UxU-action})
is transitive and this leads to our first example of a super gate
to which we now turn.

\section{\label{sec:Super-Golden-Gates}Super Golden Gates}

Each Super-Golden-Gate set is composed of a finite group $C$ and
an involution $T$, which lie in a $\left\{ P\right\} $-arithmetic
group for $P$ a prime ideal of the integers $\mathcal{O}_{K}$ of
a totally definite quaternion algebra $D$, over a totally real number
field $K$. We require that: 
\begin{enumerate}
\item $C$ acts simply transitively on the neighbors of the origin in $X_{P}$.
\item $T$ is an involution which takes the origin to one of the neighbors
. 
\end{enumerate}
Since $T$ is an involution, it inverts some edge $e_{0}$ whose origin
is $v_{0}$. Denote by $\Gamma$ the group generated by $C$ and $T$,
and by $\Delta$ the group generated by $\left\{ cTc^{-1}\,\middle|\,c\in C\right\} $.
The next Proposition follows by Bass-Serre theory from the assumptions
(1), (2) above.
\begin{prop}
\begin{enumerate}
\item $\Gamma$ acts simply transitively on the directed edges of $X_{P}$.
\item The $T$-count of $\gamma\in\Gamma$ is $d_{X_{p}}\left(v_{0},\mathrm{orig}\left(\gamma e_{0}\right)\right)=d_{X_{p}}\left(v_{0},\gamma v_{0}\right)$,
and in particular there are $\left|C\right|^{2}\left(\left|C\right|-1\right)^{t-1}$
elements of $T$-count $t$.
\item $\Gamma$ is the free product of $C$ and $\left\langle T\right\rangle \simeq\nicefrac{\mathbb{Z}}{2\mathbb{Z}}$.
\item $\Delta$ is a Golden-Gate set; namely, it acts simply transitively
on the vertices of $X_{P}$, and thus is a free product of $\left|C\right|$
copies of $\nicefrac{\mathbb{Z}}{2\mathbb{Z}}$.
\item $\Gamma=\Delta\rtimes C$. 
\end{enumerate}
\end{prop}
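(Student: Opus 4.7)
The plan is to prove item (4) first, since it is the hinge on which the other items hang. Because $C$ stabilizes $v_{0}$ (as $C$ acts on neighbors of $v_{0}$) and $T$ swaps $v_{0}$ with $v_{1}:=T v_{0}$, the conjugate $cTc^{-1}$ is an involution that swaps $v_{0}$ with $cv_{1}$. As $c$ ranges over $C$, the vertex $cv_{1}$ ranges without repetition over all $|C|$ neighbors of $v_{0}$ by hypothesis~(1). Thus the set $\{cTc^{-1}:c\in C\}$ is exactly the type of involutive generating set whose Golden-Gate analysis was carried out in Section~\ref{subsec:Quaternions-and-Ramanujan}: the normal form~(\ref{eq:path-in-Gamma}) shows that every $\delta\in\Delta$ has a unique expression as a reduced word in these involutions, its tree-distance $d_{X_{P}}(v_{0},\delta v_{0})$ equals that word length, and $\Delta$ acts simply transitively on the vertices of $X_{P}$, whence $\Delta\cong\bigast_{c\in C}\langle cTc^{-1}\rangle$.

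Next I would deduce (5). Since $\Delta_{v_{0}}=\{1\}$ (simple transitivity on vertices) and $C\subseteq\mathrm{Stab}_{\Gamma}(v_{0})$, we have $C\cap\Delta=\{1\}$. For $c',c\in C$ we have $c'(cTc^{-1})c'^{-1}=(c'c)T(c'c)^{-1}\in\Delta$, so $C$ normalizes $\Delta$. Any $\gamma\in\Gamma$ maps $v_{0}$ to some vertex, which by transitivity of $\Delta$ equals $\delta v_{0}$ for a unique $\delta\in\Delta$; then $\delta^{-1}\gamma$ stabilizes $v_{0}$, and one checks it lies in $C$ because $C$ is the full $\Gamma$-stabilizer of $v_{0}$ (any stabilizer element permutes neighbors, and $C$ already realizes all such permutations simply transitively together with $\Delta$-translates). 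This yields $\Gamma=\Delta\rtimes C$.

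Items (1) and (3) now follow cleanly. For (1), any directed edge $(w,w')$ has origin $w=\delta v_{0}$ for a unique $\delta\in\Delta$, and $\delta^{-1}(w,w')$ is a directed edge at $v_{0}$; since $C$ acts simply transitively on the neighbors of $v_{0}$, a unique $c\in C$ maps $e_{0}=(v_{0},v_{1})$ to $\delta^{-1}(w,w')$, giving a unique $\gamma=\delta c$ sending $e_{0}$ to $(w,w')$. For (3), combine $\Gamma=\Delta\rtimes C$ with $\Delta\cong\bigast_{c\in C}\langle cTc^{-1}\rangle$ and rewrite a reduced word in $\Delta\rtimes C$ in the alphabet $C\cup\{T\}$: a reduced word $(c_{1}Tc_{1}^{-1})\cdots(c_{t}Tc_{t}^{-1})\cdot c$ (with $c_{i}\ne c_{i+1}$) collapses to $c_{0}' T c_{1}' T\cdots T c_{t}'$ with $c_{i}'\in C\setminus\{1\}$ for $1\le i\le t-1$; conversely any such alternating word is clearly reduced in $\Gamma=\Delta\rtimes C$ and hence nontrivial. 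This is exactly the normal form for $C*\langle T\rangle$, establishing (3).

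Finally (2) is bookkeeping: given $\gamma=c_{0}Tc_{1}T\cdots Tc_{t}$ in normal form, since each $c_{i}\in C$ fixes $v_{0}$ we can collapse the right-most $c_{t}$ and rewrite $\gamma v_{0}$ as the $\Delta$-element $(c_{0})T(c_{0}c_{1})T(c_{0}c_{1}c_{2})^{-1}\cdots$ applied to $v_{0}$, which in the Golden-Gate generators of $\Delta$ is a reduced word of length exactly $t$; by the identification from~(\ref{eq:path-in-Gamma}) this is $d_{X_{P}}(v_{0},\gamma v_{0})$. The enumeration $|C|^{2}(|C|-1)^{t-1}$ is then immediate from the normal form of (3), with $|C|$ choices for each of $c_{0},c_{t}$ and $|C|-1$ choices for each of the $t-1$ middle letters. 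The main subtlety to watch carefully is the identification of $\mathrm{Stab}_{\Gamma}(v_{0})$ with $C$ used in proving (5), which relies on verifying that no element of $\Gamma$ outside $C$ can fix $v_{0}$ — this is secured by the normal form of (3), so in practice items (3) and (5) should be proved together in a single induction, with (4) supplying the base.
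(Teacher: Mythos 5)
Your approach is genuinely different from the paper's: the paper simply cites Bass--Serre theory to derive the Proposition from hypotheses (1), (2), whereas you try to build everything by hand from the normal-form / tree-navigation analysis of Section~\ref{subsec:Quaternions-and-Ramanujan}. This is a legitimate and even illuminating alternative --- it yields the navigation algorithm and the length count directly, with no appeal to the theory of graphs of groups --- but the argument as you have ordered it is circular, and you half-acknowledge this at the end. Concretely: to get simple transitivity in (4) from equation~(\ref{eq:path-in-Gamma}), you must know that $\Delta\cap\mathrm{Stab}_{\Gamma}(v_{0})=\{1\}$, since that equation only produces a decomposition $\mu=\delta_{j_{1}}\cdots\delta_{j_{k}}u^{-1}$ with $u$ in the stabilizer; and to get $\mathrm{Stab}_{\Gamma}(v_{0})=C$ in your proof of (5), you invoke (3), which in your scheme is derived from (4) and (5). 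So (4)~$\Rightarrow$~(5)~$\Rightarrow$~(1),(3) and (5)~$\Leftarrow$~(3) closes a loop that ``prove (3) and (5) together by induction'' does not, on its own, break.

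The clean repair is to change the entry point: prove, by induction on $t$, the single length lemma that any reduced alternating word $\gamma=c_{0}Tc_{1}T\cdots Tc_{t}$ (with $c_{i}\neq1$ for $1\leq i\leq t-1$) satisfies $d_{X_{P}}(v_{0},\gamma v_{0})=t$, \emph{and} that the first edge of the geodesic from $v_{0}$ to $\gamma v_{0}$ is $(v_{0},c_{0}v_{1})$ (the latter is what makes the induction go through, because $T$ carries neighbors of $v_{0}$ other than $v_{1}$ to neighbors of $v_{1}$ other than $v_{0}$). This lemma immediately gives (2); it gives (3) because the distance and the first edge recover $t$ and $c_{0}$ and one peels off letters inductively, so distinct reduced words are distinct in $\Gamma$; it gives $\mathrm{Stab}_{\Gamma}(v_{0})=C$ (only $T$-count~$0$ words fix $v_{0}$), hence (1), (5); and (4) follows by rewriting a kernel element $c_{0}Tc_{1}\cdots Tc_{t}$ with $c_{0}\cdots c_{t}=1$ as a reduced product of the involutions $cTc^{-1}$. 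With that reordering your proof is correct and self-contained; as submitted, the appeal to Section~\ref{subsec:Quaternions-and-Ramanujan} for (4) and to (3) for (5) leaves an unfilled gap that Bass--Serre theory quietly handles and your induction would handle, but which the present write-up does not.
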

One can navigate in $\Gamma$ as follows: If the origin of $\gamma e_{0}$
is $v_{0}$, then $\gamma\in C$. In not, there is a unique $c_{0}\in C$
for which $\gamma c_{0}^{-1}e_{0}$ is pointing towards $v_{0}$,
since $\left\{ \gamma ce_{0}\,\middle|\,c\in C\right\} $ are the
rotations of $\gamma e_{0}$ around its origin. Then, 
\begin{align*}
d_{X_{p}}\left(v_{0},\mathrm{orig}\left(\gamma c_{0}^{-1}Te_{0}\right)\right) & =d_{X_{p}}\left(v_{0},\mathrm{term}\left(\gamma c_{0}^{-1}e_{0}\right)\right)\\
 & =d_{X_{p}}\left(v_{0},\mathrm{orig}\left(\gamma c_{0}^{-1}e_{0}\right)\right)-1=d_{X_{p}}\left(v_{0},\mathrm{orig}\left(\gamma e_{0}\right)\right)-1.
\end{align*}
Continuing in this manner one finds $c_{0},\ldots,c_{\ell-1}\in C$
such that $\mathrm{orig}\left(\gamma c_{0}^{-1}T\ldots c_{\ell-1}^{-1}Te_{0}\right)=v_{0}$,
hence $c_{\ell}:=\gamma c_{0}^{-1}T\ldots c_{\ell-1}^{-1}T$ is in
$C$, and $\gamma=c_{\ell}Tc_{\ell-1}T\ldots Tc_{1}Tc_{0}$.

\subsection{\label{subsec:Examples}Examples}

In the examples of Super-Golden-Gate sets that follow we explicate
the definite quaternion algebra $D$ over the totally real field $K$
and order $\mathcal{O}$ as well as the prime $P$ at which we allow
denominators. Both $K$ and $\mathcal{O}$ have class number $1$
and we refer to the corresponding entries in Table 8.2 of \cite{kirschmer2010algorithmic}.

\subsubsection{\label{subsec:Lipschitz-quaternions}Lipschitz quaternions and Pauli
matrices}

The simplest example occurs for $D$ being the Hamilton quaternions
over $\mathbb{Q}$, and $\mathcal{O}$ the Lipschitz order $\mathbb{Z}+\mathbb{Z}\underline{i}+\mathbb{Z}\underline{j}+\mathbb{Z}\underline{k}$
which was used to construct the golden gates in the previous Section.
If one takes $P=\left(3\right)$, then an explicit isomorphism from
$\left(D\otimes\mathbb{Q}_{3}\right)^{\times}/\mathbb{Q}_{3}^{\times}$
to $PGL_{2}\left(\mathbb{Q}_{3}\right)$ is given by 
\begin{equation}
a+bi+cj+dk\longmapsto\left(\begin{matrix}a+c+\rho d & b+\rho c-d\\
-b+\rho c-d & a-c-\rho d
\end{matrix}\right),\qquad\left(\rho=\sqrt{-2}\right),\label{eq:split-Q3}
\end{equation}
where $\sqrt{-2}=\ldots200211\in\mathbb{Q}_{3}$. The unit group $\mathcal{O}^{\times}/\mathbb{Z}^{\times}$
is $\left\{ 1,i,j,k\right\} $, which corresponds in $PU\left(2\right)$
to the Pauli matrices
\[
C_{4}:=\left\{ \left(\begin{matrix}1 & 0\\
0 & 1
\end{matrix}\right),\left(\begin{matrix}i & 0\\
0 & -i
\end{matrix}\right),\left(\begin{matrix}0 & 1\\
-1 & 0
\end{matrix}\right),\left(\begin{matrix}0 & i\\
i & 0
\end{matrix}\right)\right\} ,
\]
and it is mapped under (\ref{eq:split-Q3}) to $\left\{ \left(\begin{smallmatrix}1 & 0\\
0 & 1
\end{smallmatrix}\right)\right.$, $\left(\begin{smallmatrix}\phantom{\ldots22}0 & 1\\
\ldots222 & 0
\end{smallmatrix}\right)$, $\left(\begin{smallmatrix}\phantom{\ldots}\phantom{2}\phantom{2}1 & \ldots211\\
\ldots211 & \ldots222
\end{smallmatrix}\right)$, $\left.\left(\begin{smallmatrix}\ldots211 & \ldots222\\
\ldots222 & \phantom{2}\ldots12
\end{smallmatrix}\right)\right\} $. These indeed act simply transitively on the neighbors of $v_{0}\in X_{3}=\nicefrac{PGL_{2}\left(\mathbb{Q}_{3}\right)}{PGL_{2}\left(\mathbb{Z}_{3}\right)}$,
as one can verify by multiplying them on the right with $\left(\begin{smallmatrix}1 & 0\\
0 & 3
\end{smallmatrix}\right)$ and computing the Iwasawa decomposition of the result. The element
$i+j+k\in\mathcal{O}$ is an involution in $D^{\times}/\mathbb{Q}^{\times}$,
it inverts the edge $e_{0}=\left[v_{0}\rightarrow\left(\begin{smallmatrix}3 & 1\\
 & 1
\end{smallmatrix}\right)v_{0}\right]$ (again using (\ref{eq:split-Q3})), and it corresponds in $PU\left(2\right)$
to 
\[
T_{4}:=\left(\begin{matrix}1 & 1-i\\
1+i & -1
\end{matrix}\right).
\]

\subsubsection{Hurwitz quaternions and tetrahedral gates}

Keeping $D=\smash{\left(\frac{-1,-1}{\mathbb{Q}}\right)}$, we consider
the maximal order of Hurwitz quaternions 
\[
\mathcal{O}=\mathbb{Z}\left[i,\tfrac{1+i+j+k}{2}\right]=\mathbb{Z}\oplus\mathbb{Z}\cdot i\oplus\mathbb{Z}\cdot j\oplus\mathbb{Z}\cdot\tfrac{1+i+j+k}{2},
\]
which corresponds to the entry $1\ 1\ 2\ 1$ in \cite[Tab.\ 8.2]{kirschmer2010algorithmic}.
The unit group $\mathcal{O}^{\times}/\mathbb{Z}^{\times}$ is the
Platonic tetrahedral group, which is isomorphic to $\mathrm{Alt}_{4}$.
In $PU\left(2\right)$ it corresponds to 
\[
C_{12}:=\left\langle \left(\begin{matrix}i & 0\\
0 & -i
\end{matrix}\right),\left(\begin{matrix}1 & 1\\
i & -i
\end{matrix}\right)\right\rangle .
\]
For $P=\left(11\right)$, the same splitting as in (\ref{eq:split-Q3})
can be used (with $\rho=\sqrt{-2}\in\mathbb{Q}_{11}$), and $C_{12}$
acts simply transitively on the neighbors of the origin of $X_{P}$.
As an involution one can take $3i+j+k$, which maps to 
\[
T_{12}:=\left(\begin{matrix}3 & 1-i\\
1+i & -3
\end{matrix}\right)
\]
in $PU\left(2\right)$. Here and in the examples which follow we do
not write down the explicit action of $C_{k}$ and $T_{k}$ on the
tree $X_{P}$ (where $k=N\left(P\right)+1$). In all the examples
the splitting of $D\otimes K_{P}$ is chosen so that $C_{k}$ fixes
the vertex $PGL_{2}\left(\mathcal{O}_{K_{P}}\right)\in X_{P}$, and
$T_{k}$ invert an edge which is incident to it.

\subsubsection{Octahedral gates}

We now take $D$ to be the Hamilton quaternions over $K=\mathbb{Q}\left(\sqrt{2}\right)$,
which has $\mathcal{O}_{K}=\mathbb{Z}\left[\sqrt{2}\right]$ and $U_{K}=\mathcal{O}_{K}^{\times}=\left\{ \pm\left(1+\sqrt{2}\right)^{m}\,\middle|\,m\in\mathbb{Z}\right\} $.
A maximal order in $D$ is given by 
\[
\mathcal{O}=\mathcal{O}_{K}\oplus\mathcal{O}_{K}\cdot\frac{1+i}{\sqrt{2}}\oplus\mathcal{O}_{K}\cdot\frac{1+j}{\sqrt{2}}\oplus\mathcal{O}_{K}\cdot\frac{1+i+j+k}{2},
\]
which is entry $2\ 8\ 1\ 1$ in \cite[Tab.\ 8.2]{kirschmer2010algorithmic}
(see also \cite[§5]{Vigneras1980Arithmetiquedesalgebres}). The unit
group of $\mathcal{O}$ modulo scalars is the Platonic octahedral
group 
\[
\nicefrac{U}{U_{K}=}\nicefrac{\mathcal{O}^{\times}}{\mathcal{O}_{F}^{\times}}=\left\langle \frac{1+i}{\sqrt{2}},\frac{1+i+j+k}{2}\right\rangle \cong\mathrm{Sym}_{4},
\]
which corresponds in $PU\left(2\right)$ to the Clifford group
\[
C_{24}:=\left\langle \left(\begin{matrix}1 & 0\\
0 & i
\end{matrix}\right),\left(\begin{matrix}1 & 1\\
i & -i
\end{matrix}\right)\right\rangle .
\]
For $P=\left(5-\sqrt{2}\right)$, there is a (unique) continuous isomorphism
$K_{P}=\mathbb{Q}\left(\sqrt{2}\right)_{5-\sqrt{2}}\cong\mathbb{Q}_{23}$,
which is given by sending $\sqrt{2}$ to a square root of $2$ in
$\mathbb{Q}_{23}$, chosen so that $P$ maps to a uniformizer. Then,
a splitting $\left(D\otimes K_{P}\right)^{\times}/K_{P}^{\times}\overset{\sim}{\longrightarrow}PGL_{2}\left(\mathbb{Q}_{23}\right)$
is given by
\[
a+bi+cj+dk\longmapsto\left(\begin{matrix}a+2c+\rho d & b+\rho c-2d\\
-b+\rho c-2d & a-2c-\rho d
\end{matrix}\right),\qquad\left(\rho=\sqrt{-5}\in\mathbb{Q}_{23}\right).
\]
Adding the involution $\left(1+\tfrac{1}{\sqrt{2}}\right)i+\frac{j}{\sqrt{2}}+\left(1-\sqrt{2}\right)k$,
which corresponds to 
\[
T_{24}:=\left(\begin{matrix}-1-\sqrt{2} & 2-\sqrt{2}+i\\
2-\sqrt{2}-i & 1+\sqrt{2}
\end{matrix}\right)\in PU\left(2\right),
\]
gives the super-golden-gate set $\Gamma=\left\langle C_{24},T_{24}\right\rangle $,
which is the full $\left\{ P\right\} $-arithmetic group in $\mathcal{O}$. 

Some subgroups of $\mathrm{Sym}_{4}$ give other super-gate-sets:

\paragraph{\uline{8-gates.}}

For $P=\left(3+\sqrt{2}\right)$, we have $\left(D\otimes K_{P}\right)^{\times}/K_{P}^{\times}\overset{\sim}{\longrightarrow}PGL_{2}\left(\mathbb{Q}_{7}\right)$
by
\[
a+bi+cj+dk\longmapsto\left(\begin{array}{rr}
a+\frac{c\left(1+\rho\right)+d\left(1-\rho\right)}{2} & b+\frac{c\left(1-\rho\right)-d\left(1+\rho\right)}{2}\\
-b+\frac{c\left(1-\rho\right)-d\left(1+\rho\right)}{2} & a-\frac{c\left(1+\rho\right)+d\left(1-\rho\right)}{2}
\end{array}\right),\qquad\left(\rho=\sqrt{-3}\right),
\]
and we obtain the gates
\[
C_{8}:=\left\langle \left(\begin{matrix}0 & 1\\
1 & 0
\end{matrix}\right),\left(\begin{matrix}1 & 0\\
0 & i
\end{matrix}\right)\right\rangle ,\quad T_{8}:=\left(\begin{matrix}\sqrt{2}-1 & 1-\sqrt{2}i\\
1+\sqrt{2}i & 1-\sqrt{2}
\end{matrix}\right)
\]
which correspond to $\mathrm{Dih}_{4}\leq\mathrm{Sym}_{4}$ and the
involution $i+j+\frac{k-i}{\sqrt{2}}$.

\paragraph{\uline{3-gates (ramified $P$).}}

For the prime $P=\left(\sqrt{2}\right)$, one has $K_{P}\cong\mathbb{Q}_{2}\left(\sqrt{2}\right)$,
and a splitting $\left(D\otimes K_{P}\right)^{\times}/K_{P}^{\times}\overset{\sim}{\longrightarrow}PGL_{2}\left(\mathbb{Q}_{2}\left(\sqrt{2}\right)\right)$
is given by
\[
a+bi+cj+dk\longmapsto\left(\begin{matrix}a+b+c\left(\alpha-\beta\right)+d\left(\alpha+\beta\right) & \sqrt{2}\left(b+c\alpha+d\beta\right)\\
-\sqrt{2}\left(b-c\beta+d\alpha\right) & a-b-c\left(\alpha-\beta\right)-d\left(\alpha+\beta\right)
\end{matrix}\right),
\]
where $\alpha=\frac{2+\sqrt{-14}}{3}$ and $\beta=\frac{2\sqrt{2}-\sqrt{-7}}{3}$.
The gate set obtained is
\[
C_{3}:=\left\langle \left(\begin{matrix}1 & 1\\
i & -i
\end{matrix}\right)\right\rangle ,\quad T_{3}:=\left(\begin{matrix}0 & \sqrt{2}\\
1+i & 0
\end{matrix}\right),
\]
which correspond to $\left\langle \frac{1+i+j+k}{2}\right\rangle \leq\mathrm{Sym}_{4}$,
and $\left(1-\frac{1}{\sqrt{2}}\right)j+\frac{1}{\sqrt{2}}k$.

\paragraph{\uline{V-gates (hybrid example).}}

Taking the subgroup $\left\langle \frac{1+i+j+k}{2},\frac{i-j}{\sqrt{2}}\right\rangle \cong\mathrm{Sym}_{3}$
of the octahedral group, one can scale its generators to $1+i+j+k$
and $i-j$, which lie in the Lipschitz quaternions. There they generate
an infinite group, which projects onto a copy of $\mathrm{Sym}_{3}$
in $\left(\frac{-1,-1}{\mathbb{Q}}\right)^{\times}/\mathbb{Q}^{\times}$.
Together with the involution $j+2k$ this gives the gate set
\[
C_{6}:=\left\langle \left(\begin{matrix}1 & 1\\
i & -i
\end{matrix}\right),\left(\begin{matrix}0 & i\\
1 & 0
\end{matrix}\right)\right\rangle ,\quad T_{6}:=\left(\begin{matrix}0 & 2-i\\
2+i & 0
\end{matrix}\right).
\]

\subsubsection{Icosahedral gates}

We move to the golden field $K=\mathbb{Q}\left(\sqrt{5}\right)$,
for which $\mathcal{O}_{K}=\mathbb{Z}\left[\varphi\right]$ ($\varphi=\frac{1+\sqrt{5}}{2}$),
and $U_{K}=\mathcal{O}_{K}^{\times}=\left\{ \pm\varphi^{m}\,\middle|\,m\in\mathbb{Z}\right\} $.
A maximal order in $D:=\left(\frac{-1,-1}{K}\right)$ is given by
the ring of \emph{icosians }
\[
\mathcal{O}=\left\{ \frac{1}{2}\left({\left(a+b\sqrt{5}\right)+\left(c+d\sqrt{5}\right)i\atop +\left(e+f\sqrt{5}\right)j+\left(g+h\sqrt{5}\right)k}\right)\,\middle|\,\begin{matrix}\small{\text{\ensuremath{a+c+e+g\equiv b+d+f+h\equiv0\Mod{2}}}}\\
\small{\text{\ensuremath{\left(c,e,a\right)\equiv\begin{cases}
\left(b,d,f\right)\Mod{2},\quad\text{ or}\\
\left(b,d,f\right)+\left(1,1,1\right)\Mod{2}
\end{cases}}}}
\end{matrix}\right\} ,
\]
which is $2\ 5\ 1\ 1$ in \cite[Tab.\ 8.2]{kirschmer2010algorithmic},
and which forms, with respect to a non-standard quadratic form, a
copy of the $E_{8}$ lattice (c.f.\ \cite{tits1980quaternions} or
\cite[§8]{Conway1999}). The unit group of $\mathcal{O}$ modulo scalars
is the Platonic icosahedral group 
\[
\nicefrac{U}{U_{K}=}\nicefrac{\mathcal{O}^{\times}}{\mathcal{O}_{F}^{\times}}=\left\langle \frac{1+i+j+k}{2},\frac{i+\varphi^{-1}j+\varphi k}{2}\right\rangle \cong\mathrm{Alt}_{5},
\]
which corresponds to 
\[
C_{60}:=\left\langle \left(\begin{matrix}1 & 1\\
i & -i
\end{matrix}\right),\left(\begin{matrix}1 & \varphi-i/\varphi\\
\varphi+i/\varphi & -1
\end{matrix}\right)\right\rangle .
\]
For $P=\left(7+5\varphi\right)$ one has $K_{P}\cong\mathbb{Q}_{59}$,
and the splitting (\ref{eq:split-Q3}) can be used again. As an involution
one can take $\left(2+\varphi\right)i+j+k$, which gives
\[
T_{60}:=\left(\begin{matrix}2+\varphi & 1-i\\
1+i & -2-\varphi
\end{matrix}\right),
\]
and the generated group $\Gamma=\left\langle C_{60},T_{60}\right\rangle $
is the full $\left\{ 7+5\varphi\right\} $-arithmetic group of $\mathcal{O}$.

\paragraph{\uline{Icosahedral $12$-gates.}}

Taking $P=\left(4-\varphi\right)$, one has $\left(D\otimes K_{P}\right)^{\times}/K_{P}^{\times}\overset{\sim}{\longrightarrow}PGL_{2}\left(\mathbb{Q}_{11}\right)$
using (\ref{eq:split-Q3}). This gives another set of gates acting
on a $12$-regular tree:
\[
C_{12}':=\left\langle \left(\begin{matrix}1 & 1\\
i & -i
\end{matrix}\right),\left(\begin{matrix}1 & \varphi+\frac{i}{\varphi}\\
\varphi-\frac{i}{\varphi} & -1
\end{matrix}\right)\right\rangle ,\quad T_{12}':=\left(\begin{matrix}\varphi-1 & 1-i\\
1+i & 1-\varphi
\end{matrix}\right),
\]
corresponding to $\left\langle \frac{1+i+j+k}{2},\frac{i-\varphi^{-1}j+\varphi k}{2}\right\rangle \cong\mathrm{Alt}_{4}\leq\mathrm{Alt}_{5}$
and the involution $\left(\varphi-1\right)i+j+k$.

\paragraph{\uline{5-gates (inert $P$).}}

Taking $P=\left(2\right)$, the completion $K_{P}$ has a residue
field of size four, and a splitting $\left(D\otimes K_{P}\right)^{\times}/K_{P}^{\times}\overset{\sim}{\longrightarrow}PGL_{2}\left(K_{P}\right)$
is given by 
\[
a+bi+cj+dk\longmapsto\left(\begin{matrix}a+c\alpha+d\beta & b+c\beta-d\alpha\\
-b+c\beta-d\alpha & a-c\alpha-d\beta
\end{matrix}\right),\quad\left(\begin{matrix}\alpha=\frac{\sqrt{5}-\sqrt{-7}}{2}\\
\beta=\frac{\sqrt{5}+\sqrt{-7}}{2}
\end{matrix}\right).
\]
The cyclic group $\left\langle \frac{\varphi+i/\varphi+j}{2}\right\rangle \leq\mathrm{Alt}_{5}$
and the involution $j+k$ give the gate set 
\[
C_{5}:=\left\langle \left(\begin{matrix}1+\varphi+i & \varphi\\
-\varphi & 1+\varphi-i
\end{matrix}\right)\right\rangle ,\quad T_{5}:=\left(\begin{matrix}0 & 1\\
i & 0
\end{matrix}\right).
\]

\paragraph{\uline{Nonexamples.}}

For the ramified and inert primes $P=\left(\sqrt{5}\right)$ and $P=\left(3\right)$
of $\mathbb{Z}\left[\varphi\right]$, the corresponding $\left\{ P\right\} $-arithmetic
groups act on a $6$-regular and a $10$-regular tree, respectively.
While the involutions $T_{6}=i+\varphi j$ and $T_{10}=i+j+k$ invert
an edge at the origin, the groups $\mathrm{Sym}_{3},\mathrm{Dih}_{5}\leq\mathrm{Alt}_{5}$
do not act transitively on its neighbors. 

\begin{rem}
Our focus throughout has been on optimal covering exponent and navigation.
If one relaxes the class number $1$ assumption or that $S$ consists
of a single prime, then one loses these optimal features. However
the gate sets that arise from such general $S$-arithmetic groups
coming from definite quaternion algebras still have reasonably good
covering exponents and navigation properties. These have been studied
in \cite{kliuchnikov2015framework}.
\end{rem}

\section{\label{sec:Iwahori-Hecke-operators}Iwahori-Hecke operators}

In this section we study asymmetric Hecke operators, both on $PU\left(2\right)$
and on the tree $X_{P}$, whose spectral properties coincides with
that of the non-backtracking random walk (NBRW) on a Ramanujan graph,
as explored in \cite{lubetzky2016cutoff}. This corresponds to words
(circuits) in free semigroups generated by Golden Gate sets.

\subsection{\label{subsec:Ramanujan-semigroups}Ramanujan semigroups in $PU\left(2\right)$}

For a Super-Golden-Gate set $\Gamma=\left\langle C,T\right\rangle $,
and an edge $e_{0}$ flipped by $T$ (as in Section \ref{sec:Super-Golden-Gates}),
denote by $\mathfrak{T}$ the ``sector'' descended from $e_{0}$,
namely, all vertices $v$ for which the shortest path from $v_{0}$
to $v$ begins with $e_{0}$. Let
\begin{equation}
S=\left\{ Tc\,\middle|\,1\neq c\in C\right\} \qquad\text{and}\qquad S^{r}=\left\{ s_{1}\cdot\ldots\cdot s_{r}\,\middle|\,s_{i}\in S\right\} ,\label{eq:S-def}
\end{equation}
and observe $\Sigma:=\bigcup_{r=0}^{\infty}S^{r}$, the semigroup
generated by $S$. Assuming that $e_{0}$ is leading away from the
origin, the map $e_{0}\mapsto\sigma e_{0}$ gives a correspondence
between $S^{r}$ and the edges which lead from the $r$-th to the
$\left(r+1\right)$-th level in $\mathfrak{T}$. In particular, $\Sigma$
is a free semigroup on $S$, and $\left|S^{r}\right|=k^{r}$, where
$k=\left|S\right|=\left|C\right|-1$. The spectrum of $T_{S}=T_{S,a_{s}}$
(with $a_{s}\equiv\frac{1}{k}$, in the notation of Section \ref{subsec:Spectral-gap-and-SA})
is particularly nice:
\begin{equation}
\Spec\left(T_{S}\big|_{L_{0}^{2}\left(PU\left(2\right)\right)}\right)\subseteq\left\{ \lambda\in\mathbb{C}\,\middle|\,\left|\lambda\right|=\frac{1}{\sqrt{k}}\right\} \cup\left\{ \frac{1}{k}\right\} .\label{eq:spec-T_S-sqrt}
\end{equation}
This should be compared with \cite[Thm.\ 1.3]{lubotzky1986hecke},
which asserts that for any \emph{symmetric} set $S'\subseteq PU\left(2\right)$
of size $k$ 
\[
\max\Spec\left(T_{S'}\big|_{L_{0}^{2}\left(PU\left(2\right)\right)}\right)\geq\frac{2\sqrt{k-1}}{k}.
\]
While the latter bound corresponds to the spectral radius of the random
walk on the Cayley graph of a free group, the bound $\frac{1}{\sqrt{k}}$
in (\ref{eq:spec-T_S-sqrt}) is the spectral radius of the random
walk on a free semigroup. Furthermore, since $T_{S^{r}}$ is precisely
$T_{S}^{r}$, we obtain $\left|\lambda\right|\leq k^{-r/2}$ for every
$\lambda\in\Spec\left(T_{S^{r}}\big|_{L_{0}^{2}\left(PU\left(2\right)\right)}\right)$. 

One should note, however, that $T_{S}$ is not normal, so that (\ref{eq:spec-T_S-sqrt})
is not enough to determine the spectral norm $W_{S}$. In fact, it
turns out that $W_{S}=1$. However, 
\begin{equation}
W_{S^{r}}=\sqrt{\frac{1}{2k^{r+1}}\left(\left(k-1\right)r\sqrt{r^{2}\left(k-1\right)^{2}+4k}+r^{2}\left(k-1\right)^{2}+2k\right)}=O\left(\frac{r}{k^{r/2}}\right).\label{eq:W_S-r}
\end{equation}
This, as well as (\ref{eq:spec-T_S-sqrt}), is obtained by considering
the action of $T_{S}$ on the tree $X_{p}$ and applying strong approximation
and the Ramanujan conjectures. We describe the case which corresponds
to the standard Hamilton quaternions for simplicity. Since $\Gamma$
acts simply transitively on the edges of $X_{p}$, we have 
\begin{equation}
L^{2}\left(PU\left(2\right)\right)\cong\Gamma\backslash\left(G_{\mathbb{R}}\times G_{p}\right)/B_{p},\label{eq:strong-app-iwahori}
\end{equation}
where $G=D^{\times}/Z\left(D^{\times}\right)$, and $B_{p}$ is the
Iwahori subgroup of $G_{p}:=G_{\mathbb{Q}_{p}}$, namely, $B_{p}=\mathrm{Stab}_{G_{p}}\left(e_{0}\right)$.
Via the isomorphism (\ref{eq:strong-app-iwahori}), $L^{2}\left(PU\left(2\right)\right)$
is a representation space for the Iwahori-Hecke algebra of $G_{p}$.
This is the algebra of compactly supported bi-$B_{p}$-invariant functions
on $G_{p}$, which acts on $L^{2}\left(G_{p}/B_{p}\right)\cong\mathrm{Edges}\left(X_{p}\right)$
by convolution, and $T_{S}$ acts by an element in this algebra.

The Ramanujan conjectures imply that every irreducible, infinite-dimensional
Iwahori-spherical representation which appears in $L^{2}\left(PU\left(2\right)\right)$
is tempered, so that the matrix coefficients of its Iwahori-fixed
vectors are in $L^{2+\varepsilon}\left(\mathrm{Edges}\left(X_{p}\right)\right)$
for all $\varepsilon>0$. The operator $T_{S}$ corresponds to NBRW
on $X_{p}$, and thus its spectrum is contained in that of the NBRW
on a $\left(k+1\right)$-regular tree, which is $\left\{ \smash{\lambda\in\mathbb{C}\,|\,\left|\lambda\right|=1/\sqrt{k}}\right\} \cup\left\{ \pm1/k\right\} $.
Furthermore, every tempered, Iwahori-spherical, unitary irreducible
representation $V$ of $G_{p}\cong PGL_{2}\left(\mathbb{Q}_{p}\right)$
is either ${\tt \left(1\right)}$ a twist of the Steinberg representation,
in which case $\dim V^{B_{p}}=1$, and $T_{S}\big|_{V^{B_{p}}}=\pm\frac{1}{k}$;
or ${\tt \left(2\right)}$ a principal series representation induced
from 
\[
\chi:\left(\begin{matrix}* & *\\
0 & *
\end{matrix}\right)\rightarrow\mathbb{C}^{\times},\quad\chi\left(\left(\begin{matrix}a & b\\
0 & d
\end{matrix}\right)\right)=\left(\frac{s}{\sqrt{p}}\right)^{\ord_{p}\left(a/d\right)}
\]
for some $s\in\mathbb{C}$ of norm $1$. In this case $\dim V^{B_{p}}=2$,
and in an appropriate orthonormal basis for $V^{B_{p}}$
\[
T_{S}\big|_{V^{B_{p}}}=\left(\begin{matrix}\frac{s}{\sqrt{p}} & 0\\
\frac{\left(p-1\right)s}{p} & \frac{1}{s\sqrt{p}}
\end{matrix}\right),
\]
from which (\ref{eq:W_S-r}) follows. We remark that this decomposition,
in the graph setting, is the key ingredient in \cite{lubetzky2016cutoff}.

\subsection{Ramanujan digraphs}

The golden-gate sets of \cite{lubotzky1986hecke,lubotzky1987hecke}
were used in \cite{LPS88} to construct explicit \emph{Ramanujan graphs},
which are finite $k$-regular graphs whose nontrivial spectrum is
contained within that of the $k$-regular tree. Namely, every eigenvalue
$\lambda$ of their adjacency operator is either trivial ($\lambda=\pm k$)
or satisfies $\left|\lambda\right|\leq2\sqrt{k-1}$. 

In a similar manner, the set $S$ defined in (\ref{eq:S-def}) can
be used to construct \emph{Ramanujan digraphs}; we say that a $k$-regular
digraph $\mathcal{G}$ is Ramanujan if every $\lambda\in\Spec\left(\mathrm{Adj}_{\mathcal{G}}\right)$
satisfies $\lambda=\left|k\right|$ or $\left|\lambda\right|\leq\sqrt{k}$.
In other words, its nontrivial spectrum is contained within that of
the Cayley digraph of the free semigroup on $k$ generators. For more
on Ramanujan digraphs, see \cite{Lubetzky2017RandomWalks}.

For $K,\mathcal{O},P,C_{k},T_{k}$ as in one of the examples in Section
\ref{subsec:Examples}, let $S_{k}=\left\{ T_{k}\cdot c\,\middle|\,1\neq c\in C_{k}\right\} $.
The Cayley graph of $\Gamma=\left\langle C_{k},T_{k}\right\rangle $
with respect to $S_{k}$ is naturally identified with the directed
line graph of the tree $X_{P}$. Namely, its vertices correspond to
directed edges in $X_{P}$, and its edges to non-backtracking steps
in $X_{P}$, so that the adjacency operator on $\mathrm{Cay}\left(\Gamma,S_{k}\right)$
describes NBRW on a tree. For an ideal $Q\neq P$ in $\mathcal{O}_{K}$,
let $\Gamma_{Q}=\ker\left(\Gamma\rightarrow G_{\mathcal{O}_{K}/Q}\right)$,
and let $Y^{P,Q}$ be the Cayley graph of $\Gamma/\Gamma_{Q}$ with
respect to the set of generators $S_{k}$. Again, $L^{2}\left(\mathrm{Verts}\left(Y^{P,Q}\right)\right)$
is a representation of the Iwahori-Hecke algebra of $G_{P}$, and
the Ramanujan conjectures imply that every nontrivial eigenvalue $\lambda$
of $\mathrm{Adj}\left(Y^{P,Q}\right)$ corresponds to a Iwahori-fixed
vector in a tempered representation, giving $\lambda=\pm1$ or $\left|\lambda\right|=\sqrt{k}$,
as in Section \ref{subsec:Ramanujan-semigroups}. In addition, we
have the trivial eigenvalue $k$ (which arises from the trivial representation),
but unlike the continuous case here we also obtain the eigenvalue
$-k$ whenever $P$ is a quadratic non-residue in $\nicefrac{\mathcal{O}_{K}}{Q}$.

Finally, we note that the groups $\Gamma/\Gamma_{Q}$ can be identified
concretely: denoting $\mathbb{F}_{q}=\mathcal{O}_{K}/Q$, we have
by Wedderburn's theorem 
\[
G_{\mathbb{F}_{q}}=\left(D\otimes\mathbb{F}_{q}\right)^{\times}/\mathbb{F}_{q}^{\times}\cong\mathrm{Mat}_{2}\left(\mathbb{F}_{q}\right)^{\times}/\mathbb{F}_{q}^{\times}=PGL_{2}\left(\mathbb{F}_{q}\right),
\]
and the image of $\Gamma/\Gamma_{Q}$ in $G_{\mathbb{F}_{q}}$ is
either $PGL_{2}\left(\mathbb{F}_{q}\right)$ or $PSL_{2}\left(\mathbb{F}_{q}\right)$,
according to the Legendre symbol $\left(\frac{P}{Q}\right)$. For
example, taking $C_{4}=\left\{ i,j,k\right\} $, $T_{4}=i+j+k$ (see
Section \ref{subsec:Lipschitz-quaternions}) and $Q=\left(23\right)$
one obtains $\Gamma/\Gamma_{Q}\cong PSL_{2}\left(\mathbb{F}_{23}\right)$
by $i\mapsto\left(\begin{smallmatrix}0 & 11\\
2 & 0
\end{smallmatrix}\right)$, $j\mapsto\left(\begin{smallmatrix}15 & 1\\
4 & 8
\end{smallmatrix}\right)$. This gives $T_{4}\cdot i\mapsto\left(\begin{smallmatrix}16 & 5\\
20 & 5
\end{smallmatrix}\right)$, and similarly for $T_{4}\cdot j,T_{4}\cdot k$. The spectrum of
the Ramanujan digraph obtained is shown in Figure \ref{fig:PSL_2_23}.

\newsavebox{\smlmat} 
\savebox{\smlmat}{$\vphantom{\Big|}S=\left\{\ensuremath{\left(\begin{smallmatrix} 16 & 5\\ 20 & 5 \end{smallmatrix}\right),\left(\begin{smallmatrix} 20 & 8\\ 5 & 1 \end{smallmatrix}\right),\left(\begin{smallmatrix} 7 & 10\\ 21 & 14 \end{smallmatrix}\right)\right\}}$}

\begin{figure}[h]
\captionsetup{format=plain, margin=0.5em, justification=raggedright}
\floatbox[{\capbeside}]{figure} {\caption{The adjacency spectrum of $\overset{\ }{Y^{3,23}}$, the Cayley graph of $PSL_{2}\left(\mathbb{F}_{23}\right)$ w.r.t.\ \usebox{\smlmat}.}\label{fig:PSL_2_23}} {\includegraphics[scale=0.33]{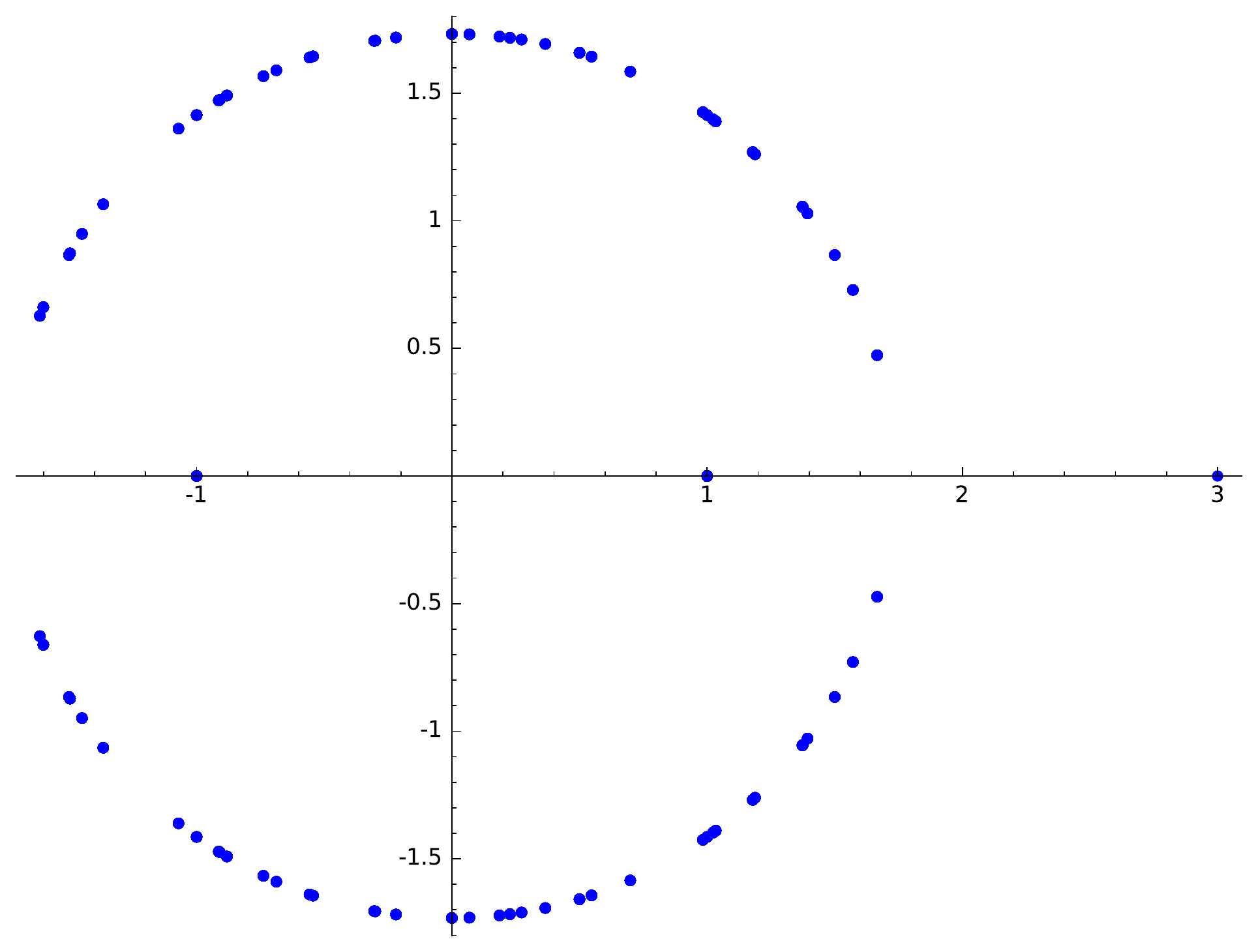}}
\end{figure}

\bibliographystyle{alpha}
\bibliography{/home/ori/Math/mybib}

\end{document}